\documentclass[12pt]{amsart}
\usepackage{pgf,tikz,pgfplots} %For TikZ
\pgfplotsset{compat=1.14}
\usepackage{mathrsfs}
\usetikzlibrary{arrows}
\usetikzlibrary{patterns}
\usepackage{pgfplots}
\usetikzlibrary{intersections, pgfplots.fillbetween}
\usepackage[colorlinks=true,urlcolor=blue, citecolor=red,linkcolor=blue,linktocpage,pdfpagelabels, bookmarksnumbered,bookmarksopen]{hyperref}
\usepackage[hyperpageref]{backref}
\usepackage{cleveref}
\usepackage{xcolor}
\usepackage{amsthm} %for citing inside theorem header
\usepackage{latexsym,amsmath,amssymb}
\usepackage{accents}
\usepackage[colorinlistoftodos,prependcaption,textsize=tiny]{todonotes}
\usepackage{a4wide}
\usepackage{soul}
\usepackage{mathtools} %For \chi with a much lower index (\chi)
\usepackage{xparse} %For a new definiton of \chi with a slightly lower index
\usepackage{enumitem}

\pgfdeclarelayer{ft}
\pgfdeclarelayer{bg}
\pgfsetlayers{bg,main,ft}

\title{A remark on staircase laminates in restricted sets}

\author[I. Buchowiec]{Igor Buchowiec}
\address[Igor Buchowiec]{
Doctoral School of Exact and Natural Sciences,%
University of Warsaw,
Banacha 2,
02-097 Warszawa, Poland}
\email{i.buchowiec@uw.edu.pl}
\author[P. Kamthorntaksina]{Pholphum Kamthorntaksina}
\address[Pholphum Kamthorntaksina]{
Department of Mathematics, 
Chulalongkorn University, 
Bangkok,
Thailand}
\email{pholphumk@gmail.com}
\author[K. Mazowiecka]{Katarzyna Mazowiecka}
\address[Katarzyna Mazowiecka]{
Institute of Mathematics,%
University of Warsaw,
Banacha 2,
02-097 Warszawa, Poland}
\email{k.mazowiecka@mimuw.edu.pl}
\author[A. Schikorra]{Armin Schikorra}
\address[Armin Schikorra]{Department of Mathematics,
 University of Pittsburgh,
 301 Thackeray Hall,
 Pittsburgh, PA 15260, USA \and Department of Mathematics, 
Chulalongkorn University, 
Bangkok,
Thailand}
  \email{armin@pitt.edu}

\author[A. Vincent]{Akshara Vincent}
\address[Akshara Vincent]{Department of Mathematics,
 University of Pittsburgh,
 301 Thackeray Hall,
 Pittsburgh, PA 15260, USA}
 \email{akv20@pitt.edu}
\definecolor{indigo}{rgb}{0.29, 0.0, 0.51}
\definecolor{p1}{gray}{0.4}
\definecolor{p2}{gray}{0.6}
\definecolor{p3}{gray}{0.98}
\definecolor{p4}{gray}{0.8}
\definecolor{p5}{gray}{0.9}

\newcommand{\Ka}{\mathscr{U}_{\bar{a},\delta,\gamma}}
\newcommand{\la}{\Lambda}

plus 6pt minus 12pt
\def\eps{\varepsilon}

\def\B{\mathbb{B}}

\def\N{{\mathbb N}}

\def\S{{\mathbb S}}

\renewcommand{\div}{{\rm div}}

\newtheorem{theorem}{Theorem}
\newtheorem{lemma}[theorem]{Lemma}

\newtheorem{proposition}[theorem]{Proposition}

\newtheorem{definition}[theorem]{Definition}

\def\rank{{\rm rank\,}}
\def\supp{{\rm supp\,}}

\newcommand{\R}{\mathbb{R}}

\newcommand{\brac}[1]{\left (#1 \right )}
\newcommand{\abs}[1]{\left |#1 \right |}

\newcommand{\barint}{
\rule[.036in]{.12in}{.009in}\kern-.16in \displaystyle\int }

\newcommand{\barcal}{\mbox{$ \rule[.036in]{.11in}{.007in}\kern-.128in\int $}}

\newcommand*\oline[1]{%
  \kern0.1em            % Counteract the inner kern
  \vbox{%
    \hrule height 0.5pt % Line above with certain width
    \kern0.4ex          % Distance between line and content
    \hbox{%
      \kern-0.1em       % Shorten the content on the left
      $#1$%             % The content, typeset math mode
      \kern-0.1em       % Shorten the content on the right
    }% end of hbox
  }% end of vbox
  \kern0.1em            % Counteract the inner kern
}

\def\mvint_#1{\mathchoice
          {\mathop{\vrule width 6pt height 3 pt depth -2.5pt
                  \kern -8pt \intop}\nolimits_{\kern -3pt #1}}%
          {\mathop{\vrule width 5pt height 3 pt depth -2.6pt
                  \kern -6pt \intop}\nolimits_{#1}}%
          {\mathop{\vrule width 5pt height 3 pt depth -2.6pt
                  \kern -6pt \intop}\nolimits_{#1}}%
          {\mathop{\vrule width 5pt height 3 pt depth -2.6pt
                  \kern -6pt \intop}\nolimits_{#1}}}

\numberwithin{theorem}{section} \numberwithin{equation}{section}

\newcommand{\aleq}{\precsim}
\newcommand{\ageq}{\succsim}
\newcommand{\aeq}{\asymp}

\let\latexchi\chi
\makeatletter
\renewcommand\chi{\@ifnextchar_\sub@chi\latexchi}
\newcommand{\sub@chi}[2]{% #1 is _, #2 is the subscript
  \@ifnextchar^{\subsup@chi{#2}}{\latexchi_{#2}}%
}
\newcommand{\subsup@chi}[3]{% #1 is the subscript, #2 is ^, #3 is the superscript
  \latexchi_{#1}^{#3}%
}
\makeatother
\makeatletter
\def\tikz@arc@opt[#1]{% over-write!
  {%
    \tikzset{every arc/.try,#1}%
    \pgfkeysgetvalue{/tikz/start angle}\tikz@s
    \pgfkeysgetvalue{/tikz/end angle}\tikz@e
    \pgfkeysgetvalue{/tikz/delta angle}\tikz@d
    \ifx\tikz@s\pgfutil@empty%
      \pgfmathsetmacro\tikz@s{\tikz@e-\tikz@d}
    \else
      \ifx\tikz@e\pgfutil@empty%
        \pgfmathsetmacro\tikz@e{\tikz@s+\tikz@d}
      \fi%
    \fi
    \tikz@arc@moveto
    \xdef\pgf@marshal{\noexpand%
    \tikz@do@arc{\tikz@s}{\tikz@e}
      {\pgfkeysvalueof{/tikz/x radius}}
      {\pgfkeysvalueof{/tikz/y radius}}}%
  }%
  \pgf@marshal%
  \tikz@arcfinal%
}
\let\tikz@arc@moveto\relax
\def\tikz@arc@movetolineto#1{%
  \def\tikz@arc@moveto{\tikz@@@parse@polar{\tikz@arc@@movetolineto#1}(\tikz@s:\pgfkeysvalueof{/tikz/x radius} and \pgfkeysvalueof{/tikz/y radius})}}
\def\tikz@arc@@movetolineto#1#2{#1{\pgfpointadd{#2}{\tikz@last@position@saved}}}
\tikzset{%
  move to start/.code=\tikz@arc@movetolineto\pgfpathmoveto,%
  line to start/.code=\tikz@arc@movetolineto\pgfpathlineto}
\makeatother
\begin{document}
\begin{abstract}
We slightly extend the convex integration via staircase laminate toolbox recently developed by Kleiner, M\"uller, Sz\'{e}kelyhidi, and Xie. As an example we revisit the proof by Astala--Faraco--Sz\'{e}kelyhidi on optimal Meyers' regularity theory via this framework.
\end{abstract}
\maketitle
\tableofcontents

\section{Introduction}

In \cite{KMSX24}, Kleiner, M\"uller, Sz\'{e}kelyhidi, and Xie provided a very useful framework for convex integration via laminates, introducing the notion of $L^p$-reducibility between sets. As applications, they provided streamlined proofs for several results, including the counterexample of Colombo--Tione \cite{ColomboTione} to the conjecture of Iwaniec--Sbordone \cite{IwaniecSbordone94}.

Notably missing and explicitly excluded from these examples, cf. \cite[p.1574]{KMSX24}, is the following classical result by Astala--Faraco--Sz\'{e}kelyhidi \cite{AFS08}:

Whenever $A\colon \B^m \to \R^{m \times m}$ is a symmetric, bounded, measurable, uniformly elliptic matrix, i.e., $\frac{1}{\Lambda} I \leq A \leq \Lambda I$ a.e. in $\B^m$, then Gehring's lemma provides Meyers improved regularity result: a solution $u \in W^{1,2}_0(\B^m)$ to $\div(A\nabla u) = 0$ in $\B^m$ satisfies $\nabla u \in L^{2+\eps}(\B^m)$. The precise size of $\eps$ (in dependence on the ellipticity constant) is unknown in general. The following result provides an upper bound for $\eps$, which is sharp in two dimensions, see \cite{A94,LN97,PV02}.

\begin{theorem}[\cite{AFS08}]\label{th:AFS}
For any $\Lambda > 1$ there exists a bounded measurable symmetric matrix $A\colon  \B^2 \to \R^{2 \times 2}$, with
\[
\frac{1}{\Lambda} I \leq  A \leq \Lambda I
\]
a function $u \in W^{1,2}(\B^2, \R)$, and an affine linear map $l \colon  \R^2\to\R$ such that
\[
\begin{cases}
  \div (A \nabla u) = 0 \quad &\text{in $\B^2$}\\
  u(x) = {l}\quad &\text{on $\partial \B^2$}
\end{cases}
\]
and \[\int_{\B^2} |\nabla u|^{p} = \infty\] for any $p \geq \frac{2\Lambda}{\Lambda -1}$.
\end{theorem}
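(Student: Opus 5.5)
We will reformulate the equation as a differential inclusion and then build a staircase laminate for it, following the framework of \cite{KMSX24}. Suppose $\div(A\nabla u)=0$ in the simply connected domain $\B^2$. Then there is a stream function $v$ with $\nabla^\perp v = A\nabla u$. The map $f=(u,v)\colon \B^2\to\R^2$ therefore satisfies
\[
Df(x) \in E_\Lambda := \Bigl\{ \begin{pmatrix} a \\ J S a\end{pmatrix} :\ a\in\R^2,\ S \text{ symmetric},\ \tfrac1\Lambda I\le S\le \Lambda I\Bigr\}
\]
a.e., where $J$ is rotation by $\pi/2$. Conversely, a Lipschitz or $W^{1,2}$ map $f$ with $Df\in E_\Lambda$ a.e. defines a measurable symmetric $A$ with the required ellipticity, provided we set $A=I$ wherever $\nabla u=0$. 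So it suffices to find $f\in W^{1,2}$ with affine boundary data, $Df\in E_\Lambda$ a.e., and $\nabla u\notin L^{p}$ for $p=\frac{2\Lambda}{\Lambda-1}$.

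Following \cite{AFS08}, we use the conformal coordinates $Df=\partial f+\bar\partial f$. In these coordinates the extremal elements $S=\mathrm{diag}(\Lambda,1/\Lambda)$ (and its rotations) correspond to the set $E_K=\{ \bar\partial f = k\,\overline{\partial f}\}$ with $k=\frac{\Lambda-1}{\Lambda+1}$, i.e.\ to the Beltrami-type conditions $\bar\partial f=\pm k\overline{\partial f}$. The first step is to restrict attention to a compact subset of these extremal matrices together with a scale parameter. The second step is to construct a \emph{staircase laminate}: a sequence of laminates of finite order $\nu_n$ with barycenter a fixed matrix $A_0$ (the affine datum $l$). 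Each $\nu_n$ is supported in $E_\Lambda\cup\{\text{one point } A_n \text{ of size }\sim 2^n\}$, and its mass satisfies $\nu_n(\{|X|>t\})\approx t^{-p}$. We build this iteratively. The residual point $A_n$ lies on the diagonal ray $\{\lambda\,\mathrm{diag}\}$; it is split along a rank-one segment into an element of $E_\Lambda$ and the point $A_{n+1}=\theta A_n$, and in the remaining direction it is split into two points of $E_\Lambda$.

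The rank-one conditions fix the weights. The probability of passing from $A_n$ to $A_{n+1}$ is $1-\frac{c}{n}$-like, or more precisely about $\theta^{-p}$ per step. Choosing the geometry optimally, using the two Beltrami branches $\pm k$, gives exactly the tail exponent $p=\frac{2\Lambda}{\Lambda-1}=1+\frac1k$. This computation is the heart of the matter and the main obstacle: we need the per-step weights to multiply up to $\nu_n(\{|X|\ge |A_n|\})\sim |A_n|^{-p}$, which gives a divergent $p$-th moment but a finite $q$-th moment for $q<p$, and in particular for $q=2$ since $p>2$.

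The third step invokes the toolbox of \cite{KMSX24}. This is the $L^p$-reducibility and the theorem that a staircase laminate whose support lies in the target set (up to vanishing error) yields a Sobolev solution. It produces a piecewise affine map $f$ with $f=l$ on $\partial\B^2$ and $Df\in E_\Lambda$ a.e. The distribution of $Df$ follows the laminate, so $|\{|Df|>t\}|\gtrsim t^{-p}$, which gives $\int|\nabla u|^p=\infty$ while $f\in W^{1,2}$. The slight extension of \cite{KMSX24} needed here is that our target set is unbounded and not compact and that the laminate is in the conformal splitting; we will verify its hypotheses (openness or stability of the reduction near $E_\Lambda$) directly, approximating $E_\Lambda$ by slightly enlarged sets $E_{\Lambda'}$ with $\Lambda'\downarrow\Lambda$ if necessary. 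Finally, since $\nabla u$ is the first row of $Df$ and the laminate's large matrices have comparable first rows, the divergence transfers from $Df$ to $\nabla u$.
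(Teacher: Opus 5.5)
Your reformulation as a differential inclusion matches the paper. So does the idea of a staircase laminate whose large matrices have comparable first rows. But Step 3 has a genuine gap, and it is exactly the point this paper was written to fix.

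\textbf{The reducibility hypothesis fails.} The result you want to invoke from \cite{KMSX24} is their Theorem 4.1, i.e.\ \Cref{th:thm4.1} with $U=\R^{2\times2}$. It requires all of $\R^{2\times2}$ to be reducible to the target. The reason is that every wiggle step leaves an error set, and the gradients there (the dust) must be re-laminated. So you need a staircase with the uniform bound $M^p(1+|X_0|^p)t^{-p}$ starting from \emph{every} $X_0$. That is false here:
\begin{itemize}
\item every element of $E_\Lambda$ has determinant $\langle a,Sa\rangle\geq 0$ (for $J$ the counterclockwise rotation);
\item $\det$ is a null Lagrangian, so any $w\in W^{1,2}$ with $w=X_0x+b$ on $\partial\Omega$ and $\nabla w\in E_\Lambda$ off a set with $\int_{\Omega_{err}}(1+|\nabla w|)^2<\varepsilon|\Omega|$ forces $\det X_0\geq-\varepsilon$;
\item in any case, your laminate is only built from points on the diagonal ray.
\end{itemize}

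\textbf{The missing idea.} You need an open set $U$ with the following properties:
\begin{itemize}
\item $U$ contains the barycenter and every construction step of the staircase;
\item the dust of the wiggle lemma lies within $\varepsilon$ of the matrix being split (\eqref{eq:nablaumAll1}, \eqref{eq:dustinU}), so it stays in $U$;
\item from \emph{every} $X_0\in U$ there is a staircase with all construction steps in $U$, with upper tail bound $M(1+|X_0|^p)t^{-p}$ and lower bound $m\,t^{-p}$, where $M,m$ do not depend on $X_0$.
\end{itemize}
In the paper, $U=\mathscr{U}_{\bar a,\delta,\gamma}$ is the set of matrices $\begin{pmatrix}a&b\\-c&d\end{pmatrix}$ with $a,d>\bar a$, $|1-a/d|<\delta$ and $|1-b|,|1-c|<\gamma$. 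A preliminary splitting first equalizes the diagonal entries (\Cref{la:laminate}). The lower tail bound must also be carried through the restarts, see \eqref{eq2.19exact}.

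\textbf{Your suggested remedies do not supply this.}
\begin{itemize}
\item Unbounded targets are already handled in \cite{KMSX24}, so that is not the issue.
\item The openness you need is around the soaring sequence, not around $E_\Lambda$.
\item Passing to $E_{\Lambda'}$ with $\Lambda'>\Lambda$ gives coefficients violating $\frac1\Lambda I\le A\le\Lambda I$, and a limit $\Lambda'\downarrow\Lambda$ does not produce a single solution.
\item Taking $\Lambda'<\Lambda$ only gives non-integrability at the larger exponent $\frac{2\Lambda'}{\Lambda'-1}$.
\end{itemize}

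\textbf{The staircase computation.} You call this the heart of the matter, but you do not carry it out, and the version you describe does not reach the sharp exponent. For invertible $A_n$, the increment $\theta A_n-A_n$ is not rank one. So each step must be a two-stage split of the diagonal entries: first in $e_1\otimes e_1$ via $\mathrm{diag}(s/\Lambda,s)\in E_\Lambda$, then in $e_2\otimes e_2$ via $\mathrm{diag}(\theta s,\theta s/\Lambda)\in E_\Lambda$.
\begin{itemize}
\item With a fixed ratio $\theta\in(1,\Lambda)$ per step (your ``size $\sim 2^n$''), the weight passed on is $\gamma(\theta)=\frac{\Lambda-\theta}{\theta(\Lambda\theta-1)}$.
\item The function $h(s)=-\ln\gamma(e^s)$ is strictly convex on $s>0$, with $h(0)=0$ and $h'(0)=\frac{2\Lambda}{\Lambda-1}$.
\item Hence the tail exponent $-\ln\gamma(\theta)/\ln\theta$ is strictly larger than $\frac{2\Lambda}{\Lambda-1}$, and $\nabla u$ would lie in $L^{2\Lambda/(\Lambda-1)}$.
\end{itemize}
The sharp exponent appears only when the ratios tend to $1$, which is your ``$1-c/n$'' alternative. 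The paper takes $X_n=\begin{pmatrix}x+n&\sigma_1\\-\sigma_2&x+n\end{pmatrix}$, so that $\gamma_n=\frac{x+n-1}{x+n+b}\cdot\frac{x+n-a}{x+n}$ with $a=\frac{\Lambda}{\Lambda-1}$ and $b=\frac{1}{\Lambda-1}$. This gives $\beta_n\approx\bigl(\frac{x}{x+n}\bigr)^{1+a+b}$ with $1+a+b=\frac{2\Lambda}{\Lambda-1}$.

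\textbf{Minor point.} With your convention, $S=\mathrm{diag}(\Lambda,\Lambda^{-1})$ and its rotations correspond to $\bar\partial f=\mu\,\partial f$ with $|\mu|=k$. The equation $\bar\partial f=\pm k\,\overline{\partial f}$ corresponds instead to the isotropic coefficients $S\in\{\Lambda I,\Lambda^{-1}I\}$.
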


In this note, we extend the framework of \cite{KMSX24} to include \Cref{th:AFS}.

To this end, we introduce, for open sets $U \subset \R^{d\times m}$, the notion of laminate \emph{with construction steps in $U$} (see \Cref{def:laminatewithconstructionsteps}) and the notion of $U$-reduced in weak $L^p$ (see \Cref{reduc}).

The following result says that if $U$ can be $U$-reduced to $K$ in weak $L^p$ then we can find many exact solutions to the differential inclusion $\nabla w \in K$. This is a slight extension of \cite[Theorem 4.1]{KMSX24}, which is \Cref{th:thm4.1} for $U = \R^{d \times n}$.

\begin{theorem}[Exact solution, cf. {\cite[Theorem 4.1]{KMSX24}}]\label{th:thm4.1}
Let $K\subset \mathbb{R}^{d\times m}$, $U \subset \R^{d\times m}$ open, and $1<p<\infty$ such that
$U$ can be {$U$}-reduced to $K$ in weak $L^{p}$, cf. \Cref{reduc}.

% Let $M,m$ be the constant obtained by the reducibility.
Then for any regular domain $\Omega\subset \R^m $, any
$X_0\in U$, $b\in \mathbb{R}^{d}$ and any
$\delta>0$, $\alpha\in(0,1)$ there exists a piecewise affine map
$w\in W^{1,1}(\Omega,\R^d)\cap C^{\alpha}(\bar\Omega,\R^d)$ with
$w(x)=l_{X_0,b}(x)\equiv X_0x+b$ on $\partial\Omega$ such that
\[
\nabla w(x)\in K \quad \text{a.e. } x\in \Omega
\]
\[
\|w-l_{X_0,b}\|_{C^{\alpha}(\bar\Omega,\R^d)}<\delta
\]
and there exists a constant $C>0$ such that for all $t>0$,
\begin{equation}\label{eq:wewant4.2c}
\bigl|\{x\in\Omega\colon \ |\nabla w(x)|>t\}\bigr|
\leq  C t^{-p}. 
\end{equation}
If moreover, $U$ can be {$U$}-reduced to $K$ {\emph{exactly}} in weak $L^{p}$ then for another constant $c>0$,
\begin{equation}\label{eq:wewant4.2c2}
\bigl|\{x\in\Omega\colon \ |\nabla w(x)|>t\}\bigr|
\geq c t^{-p} \quad \forall t \in (1,\infty).
\end{equation}

\end{theorem}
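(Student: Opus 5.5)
We follow the scheme of \cite[Theorem 4.1]{KMSX24} and check that every lamination step remains admissible once all intermediate matrices are required to lie in $U$. By \Cref{reduc}, $U$ being $U$-reduced to $K$ in weak $L^p$ should mean the following. For every $X\in U$ there is a laminate $\nu_X$ with construction steps in $U$ (\Cref{def:laminatewithconstructionsteps}) and barycenter $X$. Its mass splits into a part supported on $K$ and a part supported on $U$ of total mass at most a fixed $\theta<1$. The tails satisfy $\nu_X(\{|Y|>t\})\le C_0 t^{-p}(1+|X|)^p$, and the $U$-part is controlled in $|X|$. The exact version adds a matching lower bound $\nu_X(\{|Y|>t\})\ge c_0t^{-p}$.

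The proof proceeds as an iteration. We start from $w_0=l_{X_0,b}$. At step $k$, $w_k$ is piecewise affine with gradient in $K\cup U$ on finitely many open pieces, and $w_k=l_{X_0,b}$ on $\partial\Omega$. On each piece $\Omega_i$ with $\nabla w_k\equiv X_i\in U$, we use the standard lemma realizing a finite-order laminate of finite type. Since all construction steps of $\nu_{X_i}$ lie in the open set $U$, each rank-one splitting $X=\lambda A+(1-\lambda)B$ with $A,B\in U$ can be realized by a piecewise affine map. That map agrees with $X x$ on $\partial\Omega_i$, is $C^\alpha$-close to it, and has gradients in an $\eps$-neighbourhood of $\{A,B\}$ except on a set of small measure. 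Openness of $U$ is what makes this possible: the error gradients in the $\eps$-neighbourhoods (and the small exceptional set, via the usual "$\nabla \in U$" bookkeeping) still lie in $U$, so the next iteration can be applied. This is the only place where the extension from $\R^{d\times m}$ to $U$ enters. Taking $\eps_k$ summable, the $C^\alpha$ errors $\delta 2^{-k-1}$ add up to less than $\delta$. The measure of the set where $\nabla w_k\notin K$ decays like $\theta^k|\Omega|$. Hence $w_k\to w$ uniformly and in $C^\alpha$, and, using the weak-$L^p$ bound to get $W^{1,1}$ convergence (as $p>1$), $\nabla w_k\to\nabla w$ a.e. This gives $\nabla w\in K$ a.e.

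For \eqref{eq:wewant4.2c}, we sum the tail estimates over the generations. The set $\{|\nabla w|>t\}$ is the union over $k$ of the sets created at step $k$. The $k$-th generation lives on a set of measure $\le\theta^k|\Omega|$, with base points $X_i$ whose size we must keep controlled. The tail contribution is then $\lesssim \theta^k C_0 t^{-p}\sum_i|\Omega_i|(1+|X_i|)^p$, and summing the geometric series gives $Ct^{-p}$. The lower bound \eqref{eq:wewant4.2c2} already follows from the first step: the realization of $\nu_{X_0}$ reproduces its distribution up to small errors, and later steps modify only the $U$-part. So a fixed fraction of the mass of $\{|Y|>t\}$ on $K$ survives, giving $\ge ct^{-p}$ for $t>1$.

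The main obstacle is the uniform tail bookkeeping across generations. The base points $X_i\in U$ produced at step $k$ may be large, and the weak-$L^p$ bound of $\nu_{X_i}$ scales with $|X_i|$. The plan is to exploit the structure built into \Cref{reduc}, namely that the $U$-part of $\nu_X$ is concentrated where $|Y|\lesssim 1+|X|$ and carries mass $\le\theta$. A layer-cake argument then combines the weak-$L^p$ tails of each generation with the geometric decay $\theta^k$, choosing $\eps_k$ small enough that the approximation errors do not spoil the constants. The scaling must be checked carefully so that the series $\sum_k \theta^k(\text{growth})^p$ converges. If it does not converge directly, we will instead use the staircase structure of \cite{KMSX24}: large base points appear only on sets whose measure is already bounded by $c\,|X|^{-p}$.
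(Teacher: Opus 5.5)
Your proposal has a genuine gap. It comes from working with a guessed form of the hypothesis rather than \Cref{reduc} itself.

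\Cref{reduc} is a statement about maps, not laminates. For every $X_0\in U$, every regular domain, and \emph{every} $\eps\in(0,1)$, $q\in(1,\infty)$, it supplies a piecewise affine $w$ with affine boundary values, the tail bound \eqref{eq2.2lower}, dust in $U$ as in \eqref{eq2.3}, and the smallness $\int_{\Omega_{err}}(1+|\nabla w|)^q<\eps|\Omega|$ of \eqref{eq2.1}. There is no fixed mass fraction $\theta$, and no laminate needs to be realized in this proof; that is the content of \Cref{th:thm4.3} and \Cref{pr:onestaircase}.

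With your $\theta$-version, the step you call the main obstacle can actually fail. If base points grow by a factor $C$ per generation on a set whose measure shrinks by $\theta$, then $\sum_i|\Omega_i|(1+|X_i|)^p$ is multiplied by $\theta C^p$ per generation, and nothing forces $\theta C^p<1$. The fallback to ``the staircase structure'' is not an argument.

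The missing idea is to let the single weighted quantity \eqref{eq2.1} control both the measure of the pieces and the size of the base points. Decompose $\Omega_{k,err}$ into pieces $\Omega_j$ with $\nabla w_k\equiv X_j\in U$, and apply \Cref{reduc} on each with $q=p$ and $\eps=2^{-(k+1)}$. The tail created at step $k+1$ is then
\[
\sum_j M^p(1+|X_j|^p)|\Omega_j|\,t^{-p}\le M^p t^{-p}\int_{\Omega_{k,err}}(1+|\nabla w_k|)^p\le M^p2^{-k}|\Omega|\,t^{-p}.
\]
This is a geometric series, and Fatou passes the bound to the limit. The same smallness, via Chebyshev, shows that already after the first step the good set $\Omega\setminus\Omega_{1,err}$ by itself carries $\ageq m^p|\Omega|t^{-p}$. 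Later steps never modify the good set, so this gives \eqref{eq:wewant4.2c2}. Your claim that ``a fixed fraction survives'' needs exactly this subtraction of the error-set contribution.

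Two further points.

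First, your description of the realization step is incorrect. In a laminate with construction steps in $U$, only the parent matrices lie in $U$. The children, and in the end the atoms in $K$, typically do not: in \Cref{s:AFS} the $G_{n,i}$ are not in $\mathscr{U}_{\bar{a},\delta,\gamma}$. So you cannot assume $A,B\in U$. Moreover, a map whose gradients are only $\eps$-close to $\{A,B\}$ would give neither $\nabla w\in K$ in the limit nor dust in $U$. What \Cref{la:basicwiggle} provides is $\nabla w\in\{A,B\}$ exactly off a small set, with $\nabla w$ close to the parent $X\in U$ on that set; see item \eqref{wiggle:error} there.

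Second, \Cref{reduc} gives no $C^\alpha$-smallness. To get $\|w-l_{X_0,b}\|_{C^\alpha}<\delta$, you have to apply it on the unit cube and then rescale and cover $\Omega$ by small copies. This uses that $r\,g((x-x_i)/r)$ with $g=f-l_{X_0,b}$ has $C^\alpha$-seminorm $r^{1-\alpha}$ times that of $g$.
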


In order to obtain the assumptions of \Cref{th:thm4.1} we have a laminate criterion, which is a slight adaptation of \cite[Theorem 4.3]{KMSX24}.

\begin{theorem}[Staircase laminate criterion, cf. {\cite[Theorem 4.3]{KMSX24}}]\label{th:thm4.3}
Let $K\subset \mathbb{R}^{d\times m}$, $U\subset \R^{d\times m}$ open, and $1<p<\infty$. Assume that there exist constants $M\ge 1$ and $m>0$ with the following property: for any $X_0\in U$ there exists a staircase laminate $\nu_{X_0}^\infty$ supported on $K$, with barycenter $X_0$ and {construction steps in $U$}, see \Cref{defstair}, satisfying the bound
\begin{equation} \label{eq1.1}
\nu_{X_0}^\infty\bigl(\{X\colon \ |X|>t\}\bigr)\le M^p\,(1+|X_0|^p)\,t^{-p}
\qquad\text{for all } t>0.
\end{equation}
Then $U$ can be $U$-reduced to $K$ in weak $L^p$.

If, in addition to \eqref{eq1.1}, we also have
\begin{equation}\label{eq:1.1lower}
\nu_{X_0}^\infty\bigl(\{X\colon \ |X|>t\}\bigr)\geq m^p\,t^{-p}
\qquad\text{for all } t\in (1,\infty),
\end{equation}
then $U$ can be $U$-reduced to $K$ {exactly} in weak $L^p$.
% Assume for any $A \in U$ there exists a staircase laminate \emph{with construction steps in $U$} supported in $K$, with barycenter ... bound. Then $U$ can be $U$-reduced to $K$ in weak $L^p$.
\end{theorem}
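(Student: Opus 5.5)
The plan is to follow the proof of \cite[Theorem 4.3]{KMSX24} essentially verbatim, and to check at each stage that every auxiliary matrix we produce stays in $U$. I expect \Cref{reduc} to require the following. Given $X_0\in U$, a regular domain $\Omega$, $\delta>0$ and $\alpha\in(0,1)$, there should be a piecewise affine $w$ with $w=l_{X_0,b}$ on $\partial\Omega$ and $\|w-l_{X_0,b}\|_{C^\alpha}<\delta$. Its gradient should decompose as $\nabla w\in K$ on a set $\Omega_K$, and $\nabla w\in U$ on the complement $\Omega\setminus\Omega_K$, where the complement has small measure, say at most $(1-\kappa)|\Omega|$ or at most $\eps|\Omega|$. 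Finally, the weak-$L^p$ bound $|\{|\nabla w|>t\}|\le C M^p(1+|X_0|^p)|\Omega|t^{-p}$ should hold.

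\textbf{Step 1: approximation by a finite-order laminate.} Take the staircase laminate $\nu^\infty_{X_0}$ and write it as the weak limit of the finite-order laminates $\nu^N$ produced in its construction. By \Cref{defstair}, each $\nu^N$ is obtained from $\delta_{X_0}$ by finitely many elementary splittings along rank-one segments. Every intermediate barycenter used in these splittings lies in $U$; this is exactly what ``construction steps in $U$'' should mean. For $N$ large, $\nu^N$ agrees with $\nu^\infty$ on $K$ up to mass $\eps$, and the remaining atoms lie in $U$ and carry mass at most $\eps$. The tail bound \eqref{eq1.1} passes to $\nu^N$ with a harmless constant, because the mass not yet in $K$ sits at points whose size is controlled by the staircase structure.

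\textbf{Step 2: realise $\nu^N$ by a piecewise affine map.} Use the standard lemma that a laminate of finite order with barycenter $X_0$ is realised, up to $\eps$ and with $C^\alpha$-small perturbation, by a piecewise affine $w$ with $w=l_{X_0,b}$ on $\partial\Omega$. This is proved by induction on the splitting tree, one rank-one split at a time. The main obstacle is here. In the inductive step, a single rank-one split $A=\lambda B+(1-\lambda)C$ is realised only approximately: a small-measure error region carries gradients close to, but not equal to, the segment $[B,C]$. In \cite{KMSX24} these gradients may be anything, but here they must lie in $U$. I would handle this with openness of $U$. Each split is performed inside some $A\in U$. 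Since $U$ is open, we can pick $\rho>0$ so that $B_\rho(A)\subset U$, and also so that the finitely many relevant segments have $\rho$-neighbourhoods in $U$. The key requirement is that every split occurring in a staircase laminate with construction steps in $U$ has its whole segment $[B,C]$ inside $U$. Given that, the error gradients of the realisation (a laminated convex-integration building block with cutoff) stay within $\rho$ of $[B,C]$, provided the cutoff parameter is chosen small relative to $\rho$. Since only finitely many splits occur, a uniform $\rho$ exists. Thus on the error set $\nabla w\in U$, and on the rest $\nabla w$ is within $\rho$ of an atom of $\nu^N$; for atoms in $K$ we instead use exact realisation of the limit.

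\textbf{Step 3: measure estimates.} The distribution of $\nabla w$ is within $\eps$ of $\nu^N$. Hence the tail bound from Step 1 gives the weak-$L^p$ bound required in \Cref{reduc}, with the constant $M^p(1+|X_0|^p)$ times $|\Omega|$. For the exact version, the lower bound \eqref{eq:1.1lower} transfers in the same way. Choose $N$ large and $\eps\ll m^p t^{-p}$ for $t$ in a bounded range, and use the staircase structure to pass to all $t\in(1,\infty)$ up to a constant. The bookkeeping here is identical to \cite{KMSX24}; the only new point is the $U$-containment in Step 2.
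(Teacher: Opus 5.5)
Your Step 1 is where the argument breaks: realising only a finite-order truncation $\nu^N$ cannot satisfy \Cref{reduc}. That definition does not just ask for the error set to be small in measure. It asks for $\int_{\Omega_{err}}(1+|\nabla w|)^q<\eps|\Omega|$ for arbitrary $q\in(1,\infty)$, see \eqref{eq2.1}, and \Cref{la:exactPART} uses this with $q=p$ in \eqref{eq2.17}. The soaring matrix $X_N$ lies outside $K$, so the whole region $\{\nabla w=X_N\}$ belongs to $\Omega_{err}$. That region has measure at least $(1-\eps)\beta_N|\Omega|$, so the error integral is at least $(1-\eps)\beta_N(1+|X_N|)^q|\Omega|$. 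Letting $N\to\infty$ makes the mass $\beta_N$ small, but not this quantity. For the staircase of \Cref{la:laminate} one has $\beta_N|X_N|^p\ge M_1$ by \eqref{beta:again}, so the error integral stays bounded below for $q=p$ and diverges for $q>p$. The exact part fails outright. Any realisation of a finite-order laminate is Lipschitz, so $|\{|\nabla w|>t\}|=0$ once $t$ exceeds the largest atom. This is incompatible with \eqref{eq:exactreducibility} for all $t>1$, and no appeal to the staircase structure can fix it.

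The missing ingredient is the realisation of the full infinite staircase, \Cref{pr:onestaircase}. There one builds $w_N$ inductively, re-applying \Cref{la:wiggleiter} with $\omega_{N+1}$ only on $\Omega^N_{ind}=\{\nabla w_N=X_N\}$. The tolerances are chosen as $\tilde\eps_i\le\eta 2^{-N-1}(1+|Y_{N+1}|)^{-q}$, which is possible because each $w_N$ is Lipschitz with an explicit bound; this makes each new layer of dust $L^q$-small, and the $w_N$ are kept Cauchy in $C^\alpha$. In the limit the regions $\{\nabla w_N=X_N\}$, of measure $\sim\beta_N|\Omega|\to0$, disappear. Then $\Omega_{err}$ consists only of accumulated dust, with $\int_{\Omega_{err}}(1+|\nabla w|)^q\le\eta|\Omega|$. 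The two-sided comparison \eqref{eq:upperlowernuinfty} with $\nu^\infty$ on $\Omega\setminus\Omega_{err}$ then gives the upper weak-$L^p$ bound (together with Chebyshev for some $q>p$ on $\Omega_{err}$). It also gives the lower bound for every $t>1$ directly.

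A secondary point concerns your ``key requirement'' that every rank-one segment $[B,C]$ lie in $U$. This is not implied by construction steps in $U$, and it is false in the application. Indeed $G_{1,1}\in\mathscr{K}_\Lambda$ has diagonal ratio $\frac{1}{\Lambda}$, and $1-\frac{1}{\Lambda}>\delta$ by \eqref{eq:lam:cond1}, so $G_{1,1}\notin\mathscr{U}_{\bar a,\delta,\gamma}$. The requirement is also unnecessary. In \Cref{la:basicwiggle} the dust lies within $\eps$ of the split point $X$ itself, \eqref{eq:nablaumAll1}, because the cut-off directions $\zeta_i$ are short. So $X\in U$ and openness of $U$ suffice.
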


{\bf Outline.}
In \Cref{s:defReduce} we introduce the notion of $U$-reducibility. We then adapt the proofs in \cite{KMSX24} to establish \Cref{th:thm4.1}. In \Cref{s:laminatecrit} we introduce the notion of construction steps in $U$ and prove \Cref{th:thm4.3}. Finally, in \Cref{s:AFS} we apply this framework to obtain a streamlined proof of \Cref{th:AFS} using $L^p$-reducibility.

{\bf Notation.}
We use the standard notation $\aleq$, $\aeq$, and $\ageq$. We write $A \aleq B$ if there exists a constant $C>0$, depending only on irrelevant data, such that
\[
A \le C\,B.
\]
We write $A \aeq B$ if $A \aleq B$ and $B \aleq A$.

{\bf Acknowledgment.} The project is co-financed by
\begin{itemize}
 \item (KM) the Polish National Agency for Academic Exchange within Polish Returns Programme -
BPN/PPO/2021/1/00019/U/00001;
\item (KM, IB) the National Science Centre, Poland grant No. 2023/51/D/ST1/02907;
\item(AS) the NSF Career DMS-2044898.
\end{itemize}
Armin Schikorra appreciates the hospitality of the Thematic Research Programme Geometric Analysis: Methods and Applications at the University of Warsaw, where part of this work was conducted. 
Armin Schikorra and Pholphum Kamthorntaksina's research was supported in part by the International Centre for Theoretical Sciences (ICTS) for participating in the program - Geometric Analysis and PDE 2026 (code: ICTS/GPDE2026/02). 
Armin Schikorra was  
Alexander von Humboldt Research Fellow.

\section{Notion of reducibility and proof of Theorem~\ref{th:thm4.1}}\label{s:defReduce}
We begin by defining the $U$-reducibility, which essentially just means that the laminate construction process can restart from within $U$. We also recall the following two standard definitions.
\begin{definition}[Regular domain]
A set $\Omega \subset \mathbb{R}^m$ is called a \emph{regular domain} if it is open, bounded, connected, and $\partial\Omega$ has zero $m$-dimensional Lebesgue measure.
\end{definition}

\begin{definition}[Piecewise affine map]\label{piecewiseaf}
Let $\Omega\subset\R^m$ be a regular domain. We call a map $w\in W^{1,1}(\Omega,\R^d)$ \emph{piecewise affine} if there exists an at most countable decomposition $\Omega=\bigcup_i \Omega_i\cup\mathcal{N}$ into pairwise disjoint regular domains $\Omega_i\subset\Omega$ and a null set $\mathcal{N}$ such that $w$ agrees with an affine map on each $\Omega_i$. That is, for any $i$ there exists $X_i\in\R^{d\times m}$ and $b_i\in\R^d$ such that $w(x)=X_ix+b_i$ for all $x\in\Omega_i$. We also denote $\mathring\Omega=\bigcup_i \Omega_i$ the open subset of $\Omega$ where $u$ is locally affine. Note that regular domains $\Omega_i$ are exactly the connected components of $\mathring\Omega$.
\end{definition}

The main difference of the following notion of reducibility in $U$ to the original version in \cite{KMSX24} is the assumption \eqref{eq2.3}.
\begin{definition}[Reducibility and exact reducibility in weak $L^p$]\label{reduc}
For $U,K\subset\R^{d\times m}$ and $1<p<\infty$ we say that
\[\emph{$U$ can be $U$-reduced to $K$ in weak $L^p$}\]
provided there exists a constant $M=M(U,K,p)\geq 1$ satisfying for an arbitrary $X_0\in U$, $b\in\R^d$, $\eps,\alpha\in (0,1)$, $q\in(1,\infty)$, and a regular domain $\Omega\subset\R^m$ the following property:

There exists a piecewise affine map $w\in W^{1,1}(\Omega,\R^d)\cap C^\alpha(\bar\Omega,\R^d)$ with $w(x)=X_0x+b$ on $\partial\Omega$ such that 
\begin{equation} \label{eq2.2lower}
    |\{x\in\Omega\colon |\nabla w(x)|>t\}|\leq M^p(1+|X_0|^p)|\Omega|t^{-p} \; \text{ for all }t>0,
\end{equation}
and on the error set
\[\Omega_{err}\coloneqq\{x\in\Omega\colon \nabla w(x)\notin K\}\]
we still have
\begin{equation} \label{eq2.3}
    \nabla w(x) \in U \quad \text{for a.e. } x\in \Omega_{err},
\end{equation}
and
\begin{equation} \label{eq2.1}
    \int_{\Omega_{err}}(1+|\nabla w|)^q dx<\eps|\Omega|.
\end{equation}

If in addition to \eqref{eq2.2lower}, there also exists a constant $m=m(U,K,p)>0$ (in particular independent of $\eps>0$) so that
\begin{equation}\label{eq:exactreducibility}
    |\{x\in\Omega\colon |\nabla w(x)|>t\}|\geq m^p
    % (1+|X_0|^p)
    |\Omega|t^{-p} \; \text{ for all }t>{1}, 
\end{equation}
then we say that $U$ can be $U$-reduced to $K$ \emph{exactly} in weak $L^p$.
\end{definition}

\Cref{th:thm4.1} is then a minor adaptation of \cite[Theorem~4.1]{KMSX24}.

We begin with the following lemma. 
\begin{lemma}\label{la:exactPART}
Let $K\subset \mathbb{R}^{d\times m}$, let $U \subset \R^{d\times m}$ be open, and let $1<p<\infty$ be such that
$U$ can be {$U$}-reduced to $K$ in weak $L^{p}$ (in sense of \Cref{reduc}).

Fix $X_0 \in U$ and set $l_{X_0,b} \coloneqq  X_0 x + b$. Let $\Omega \subset \R^m$ be a regular domain. Fix parameters $\delta >0$ and $\alpha \in (0,1)$.

Then, we can find a sequence of piecewise affine maps $w_k\in W^{1,1}(\Omega,\R^d)\cap C^\alpha(\overline{\Omega},\R^d)$, for $k \in \N$, such that $w_k=l_{X_0,b}$ on $\partial\Omega$ and the following hold.

\begin{enumerate}
 \item Define
\[
\Omega_{k,err}
\coloneqq
\{x\in\Omega \colon \nabla w_k(x)\notin K\}.
\]
Then
\begin{equation} \label{eq2.17}
\int_{\Omega_{k,err}} (1+|\nabla w_k|)^p \, dx
< 2^{-k}|\Omega|.
\end{equation}

\item For $k \ge 2$,
\[
\Omega_{k,err}
\subset
\Omega_{k-1,err} \cup N
\]
for some null set $N$, and
\[
\nabla w_k = \nabla w_{k-1}
\quad \text{a.e. on }
\Omega \setminus \Omega_{k-1,err}.
\]
\item \begin{equation} \label{eq2.18}
\|w_k-l_{X_0,b}\|_{C^\alpha(\overline{\Omega})}
< \delta(1-2^{-k}).
\end{equation}
\item For every $t>0$,
\begin{equation} \label{eq2.19}
|\{x\in\Omega : |\nabla w_k(x)|>t\}|
\le
M^p(1+|X_0|^p)|\Omega|t^{-p}
\sum_{i=0}^{k-1} 2^{-i}.
\end{equation}
\item \begin{equation} \label{eq:dust}
\nabla w_k(x) \in U
\quad \text{for a.e. } x\in \Omega_{k,err}.
\end{equation}
\end{enumerate}

{ If we have exact reducibility, then additionally
 \begin{equation} \label{eq2.19exact}
     |\{x\in\Omega \setminus \Omega_{k,err}\colon |\nabla w_k(x)|>t\}|\geq \frac{1}{100} m^p |\Omega|t^{-p}  \quad \text{for any } t \in (1,\infty).
 \end{equation}
 }
\end{lemma}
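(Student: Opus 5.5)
We construct the sequence $w_k$ by induction on $k$, following the iterative scheme of \cite[Theorem~4.1]{KMSX24}. The new ingredient is \eqref{eq2.3}: it keeps the gradient on the error set inside $U$, which is what lets us apply the reducibility again there.

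\emph{Base step.} We apply \Cref{reduc} on $\Omega$ with base point $X_0$, affine shift $b$, exponent $q=p$, H\"older exponent $\alpha$, and a small error parameter $\eps_1\le 2^{-1}$. We also require the resulting map $w$ to satisfy $\|w-l_{X_0,b}\|_{C^\alpha}<\delta/2$. Reducibility does not literally provide this $C^\alpha$-closeness. We obtain it by the standard rescaling trick: cover $\Omega$ up to a null set by countably many small disjoint scaled copies $r_jQ+x_j$ of a fixed regular domain (for instance cubes), and apply reducibility on each copy. The boundary condition $w=l_{X_0,b}$ on each $\partial(r_jQ+x_j)$ lets the pieces glue. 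Both the weak-$L^p$ bound and the error integral scale with $|\Omega_j|$, so they sum to the corresponding bounds on $\Omega$. Meanwhile $w-l$ on the $j$-th piece equals $r_j$ times a fixed profile, so its $C^\alpha$ seminorm is of order $r_j^{1-\alpha}$ and becomes small once the $r_j$ are small. This gives $w_1$ with \eqref{eq2.17}, \eqref{eq2.18}, \eqref{eq:dust}, and \eqref{eq2.19} for $k=1$.

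\emph{Inductive step.} Given $w_{k-1}$, the error set $\Omega_{k-1,err}$ is, up to a null set, a countable union of the regular pieces $\Omega_i$ on which $\nabla w_{k-1}\equiv X_i\notin K$. By \eqref{eq:dust}, each $X_i$ lies in $U$. On each such $\Omega_i$ we replace $w_{k-1}$ by a map produced by reducibility (again via the covering/rescaling trick), with base point $X_i$, boundary data $w_{k-1}|_{\Omega_i}$, and a tiny error parameter $\eps_{k,i}$. Outside the error set we leave $w_{k-1}$ unchanged. Property (2) then holds by construction, and (5) follows from \eqref{eq2.3} applied on each piece.

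For \eqref{eq2.17}, the new error set is contained in the union of the new error sets inside the $\Omega_i$. Choosing $\eps_{k,i}$ so that $\sum_i \eps_{k,i}|\Omega_i|\le 2^{-k}|\Omega|$ gives the bound with $q=p$.

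For \eqref{eq2.19}, the level set of $|\nabla w_k|$ splits into the part on $\Omega\setminus\Omega_{k-1,err}$, which is already controlled by the bound for $w_{k-1}$, and the new parts. On each $\Omega_i$ the new part is bounded by $M^p(1+|X_i|^p)|\Omega_i|t^{-p}$, and summing over $i$ gives
\[
M^pt^{-p}\int_{\Omega_{k-1,err}}(1+|\nabla w_{k-1}|^p)\le M^pt^{-p}2^{-(k-1)}|\Omega|.
\]
Here we used \eqref{eq2.17} at step $k-1$ and $1+|X|^p\le(1+|X|)^p$. We also need $M\geq 1$ to absorb the factor $(1+|X_0|^p)\ge1$. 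The pieces of $\Omega_{k-1,err}$ removed in this way only decrease the old level set, so the geometric sum appears.

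For \eqref{eq2.18}, we make the modification on each piece smaller than $\delta 2^{-k}$ in $C^\alpha$, using the scaling. Since the modifications agree with $w_{k-1}$ on the boundaries of the pieces, the H\"older seminorm of the sum across different pieces is controlled (for points in different pieces, compare through the boundary points on the segment between them). This is the step I expect to be the main technical obstacle, because gluing the pieces while keeping control of the H\"older norm requires care.

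\emph{Exact case.} For \eqref{eq2.19exact}, at the first step \eqref{eq:exactreducibility} gives a lower bound $m^p|\Omega|t^{-p}$ for the whole level set of $\nabla w_1$. Subtracting the error set, whose measure is bounded via \eqref{eq2.17} by $\eps_1|\Omega|$, leaves at least half of this bound, provided $\eps_1$ is chosen so that $t^{p}\eps_1$ is negligible. That works for bounded $t$. For large $t$ we instead use Chebyshev on the error set with a higher exponent $q>p$ in \eqref{eq2.1}: this gives $|\{x\in\Omega_{err}:|\nabla w|>t\}|\le \eps t^{-q}|\Omega|\le\eps t^{-p}|\Omega|$ for $t>1$. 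Since good points are never changed later by property (2), the bound persists for all $k$, losing at most a factor of $1/100$ overall.
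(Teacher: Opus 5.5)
Your proposal is correct and follows essentially the same route as the paper: reducibility on a fixed cube plus rescaling/covering to build $w_1$, and re-applying reducibility on the affine pieces of $\Omega_{k-1,err}$, which is allowed because the dust lies in $U$. The weak-$L^p$ bound comes from the geometric series via $\sum_i(1+|X_i|^p)|\Omega_i|\le\int_{\Omega_{k-1,err}}(1+|\nabla w_{k-1}|)^p$, and the exact case uses Chebyshev on the error set together with the fact that good points are never changed later.

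Two small remarks, neither a gap. Chebyshev with exponent $p$ already gives $|\{x\in\Omega_{err}\colon |\nabla w|>t\}|\le \eps t^{-p}|\Omega|$ for all $t>0$, so the detour via bounded $t$ and $q>p$ is unnecessary; you only need $\eps$ small compared to $m^p$. Also, the gluing costs a factor $2$ in the H\"older seminorm, so the modification on each piece should be below $\delta 2^{-k-1}$ rather than $\delta 2^{-k}$.
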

\begin{proof}[Proof of \Cref{la:exactPART}]
\underline{To construct the map $w_1$}, we consider the regular domain $\Omega_0 = (0,1)^m \subset \R^m$.

 Since $U$ can be $U$-reduced to $K$ in weak $L^p$, by \Cref{reduc}, there exists a piecewise affine map $f\in W^{1,1}(\Omega_0,\R^d)\cap C^\alpha(\overline{\Omega_0},\R^d)$ such that $f=l_{X_0,b}$ on $\partial\Omega_0$ and for some fixed ${\lambda} \in (0,1)$
 \begin{equation} \label{eq2.20}
     \int_{\Omega_{0,err}} (1+|\nabla f|)^p dx < \frac{1}{2} {\lambda}  |\Omega_0|,
 \end{equation}
 and
 \begin{equation} \label{eq2.21}
     |\{x\in\Omega_0 \colon |\nabla f(x)|>t\}| \le M^p(1+|X_0|^p)|\Omega_0|t^{-p} \quad \text{for any } t>0,
 \end{equation}
 and 
 \begin{equation}
     \nabla f(x)\in U \quad \text{for a.e. }x\in \Omega_{0,err},
 \end{equation}
 where $\Omega_{0,err} \coloneqq  \{x\in\Omega_0 : \nabla f(x)\notin K\}$, and 
\[
\|f-l_{X_0,b}\|_{C^{\alpha}(\Omega_0)} < \infty.
\]

{If we have \emph{exact} reducibility, in addition we have 
 \begin{equation} \label{eq2.21exact}
     |\{x\in\Omega_0 \colon |\nabla f(x)|>t\}| \geq m^p |\Omega_0|t^{-p} \quad \text{for any } t \in (1,\infty).
 \end{equation}}

Fix some small $\eps' \in (0,1)$ (to be chosen later). By covering, we obtain countable sequences $r_i\in (0,1)$ and $x_i\in\R^{d\times m}$ such that $r_i^{1-\alpha}\le\varepsilon'$, and the countable disjoint family of regular domains $\{\Omega_i\}_{i\in\N}$, $\Omega_i\coloneqq r_i\Omega_0+x_i$, such that $\big|\Omega\setminus\bigcup_i \Omega_i\big|=0$.
Set 
\[
 f_i(x) \coloneqq  r_i\brac{f\left(\frac{x-x_i}{r_i}\right) - b}+l_{X_0,b}
 \]
 Observe that on $\partial \Omega_i$ we have $f_i\Big|_{\partial \Omega_i}(x) = l_{X_0,b}$.
 
 % Applying the rescaling-covering argument (cf. \cite[Section 2.1]{KMSX24}) with the domain $\Omega$ and $\varepsilon'>0$ (will be chosen later), we obtain $r_i\in (0,1)$ and $x_i\in\R^{d\times m}$ such that $r_i^{1-\alpha}\le\varepsilon'$, and a countable disjoint family of regular domains $\{\Omega_i\}_{i\in\N}$, $\Omega_i=r_i\Omega_0+x_i$, such that $\big|\Omega\setminus\bigcup_i \Omega_i\big|=0$, together with the map 
 % \[
 % f_i(x) = r_if\left(\frac{x-x_i}{r_i}\right)+X_0x_i+(1-r_i)b
 % \]
 % on each $\Omega_i$. Then $f_i = l_{X_0,b}$ on $\partial\Omega_i$ and, 
 We have
 \[
 \|f_i-l_{X_0,b}\|_{C^\alpha(\overline{\Omega_i})}\aleq r_i^{1-\alpha}\|f-l_{X_0,b}\|_{C^\alpha(\overline{\Omega_0})}\le \varepsilon'\|f-l_{X_0,b}\|_{C^\alpha(\overline{\Omega_0})}.
 \]
 Moreover, we observe that $\nabla f_i(x) = \nabla f\left(\frac{x-x_i}{r_i}\right)$ on $\Omega_i$, so
 \[
 \Omega_{i}^{err} \coloneqq  \{x\in\Omega_i : \nabla f_i(x)\notin K\} = r_i\Omega_{0,err}+x_i,
 \]
 and
 \begin{equation}\label{eq:subdust}
     \nabla f_i(x) \in U \quad \text{for a.e. } x\in\Omega_i^{err}.
 \end{equation}

 Define $w_1 : \Omega\to\R^d$ by
 \begin{equation*}
     w_1(x) = \begin{cases}
         f_i(x) & \text{if } x\in\Omega_i ; \\
         l_{X_0,b}(x) & \text{if } x\in\Omega\setminus\bigcup_i \Omega_i.
     \end{cases}
 \end{equation*}
Clearly,  \eqref{eq:dust} is satisfied by \eqref{eq:subdust}.

Also, by the gluing argument (cf. \cite[Section 2.1]{KMSX24}), we obtain a piecewise affine map $w_1\in W^{1,1}(\Omega,\R^d)\cap C^\alpha(\overline{\Omega},\R^d)$ such that $w_1=l_{X_0,b}$ on $\partial\Omega$. Moreover, 
\begin{equation}
    \Omega_{1,err} = \bigcup_i \Omega_i^{err}\cup N,
\end{equation}
where $N=\{x\in\Omega\setminus\bigcup_i\Omega_i : \nabla l_{X_0,b}(x) = X_0\notin K\}$ has measure zero.

 By changing variables $x=r_iy+x_i$ and using \eqref{eq2.20}, we obtain
 \begin{equation*}
 \begin{split}
     \int_{\Omega_{1,err}} (1+|\nabla w_1|)^pdx = \sum_i \int_{\Omega_i^{err}} (1+|\nabla f_i|)^pdx 
     &= \sum_i \int_{\Omega_{0,err}} (1+|\nabla f|)^p r_i^m dy \\
     &< \sum_i \frac{1}{2} r_i^m|\Omega_0| \\
     &= \sum_i \frac{1}{2}|\Omega_i| \\
     &= \frac{1}{2}|\Omega|,
 \end{split}
 \end{equation*}
 which is \eqref{eq2.17} for $w_1$.

 Pick $\varepsilon'>0$ small enough so that $2\varepsilon'\|f-l_{X_0,b}\|_{C^\alpha(\overline{\Omega_0},\R^d)}<\delta/2$. By a gluing argument (cf. \cite[Section 2.1, p.1575]{KMSX24}), we obtain \eqref{eq2.18} for $w_1$. Indeed,
 \[
 \|w_1-l_{X_0,b}\|_{C^\alpha(\overline{\Omega},\R^d)}\le 2\sup_i \|f_i-l_{X_0,b}\|_{C^\alpha(\overline{\Omega_i},\R^d)} \le 2\varepsilon'\|f-l_{X_0,b}\|_{C^\alpha(\overline{\Omega_0},\R^d)}<\frac{\delta}{2}.
 \]

 Next, since $\Omega_i$ are pairwise disjoint we find 
 \[
 |\{x\in\Omega : |\nabla w_1(x)|>t\}| = \sum_{i=1} |\{x\in\Omega_i : |\nabla f_i(x)|>t\}|.
 \]
 We also observe that, for each $i$,
 \[
 \{x\in\Omega_i : |\nabla f_i(x)|>t\} = r_i\{x\in\Omega_0 : |\nabla f(x)|>t\}+x_i.
 \]
 and thus 
 \[
 |\{x\in\Omega : |\nabla w_1(x)|>t\}| = \sum_{i=1} r_i^m |\{x\in\Omega_0 : |\nabla f(x)|>t\}|.
 \]
 From \eqref{eq2.21} we find, using again that $\Omega$ is the disjoint union of the $\Omega_i$ up to a set of measure zero,
 \begin{equation*}
     \begin{split}
         |\{x\in\Omega : |\nabla w_1(x)|>t\}| &\le \sum_i r_i^m M^p(1+|X_0|^p)|\Omega_0|t^{-p} \\
         &= \sum_i M^p(1+|X_0|^p)|\Omega_i|t^{-p} \\
         &=M^p(1+|X_0|^p)|\Omega|t^{-p}.
     \end{split}
 \end{equation*}
That is we have established \eqref{eq2.19} for $w_1$.

 In order to establish \eqref{eq2.19exact}, we argue similarly
 \begin{equation*}
     \begin{split}
         |\{x\in\Omega : |\nabla w_1(x)|>t\}| &\geq \sum_i r_i^m m^p |\Omega_0|t^{-p} \\
         &= \sum_i m^p(1+|X_0|^p)|\Omega_i|t^{-p} \\
         &=m^p(1+|X_0|^p)|\Omega|t^{-p}.
     \end{split}
 \end{equation*}

On the other hand, by \eqref{eq2.20} and Chebyshev inequality
 \[
 |\{x\in\Omega_{err,1} : |\nabla w_1(x)|>t\}| \leq t^{-p} \frac{{\lambda}}{2} |\Omega|.
 \]
 For ${\lambda}$ suitably small (depending only on $m^p$) we then have 
 \[
|\{x\in\Omega \setminus \Omega_{err,1}: |\nabla w_1(x)|>t\}| \geq \brac{m^p - \frac{\lambda}{2}} |\Omega|t^{-p} \geq \frac{1}{100} m^p t^{-p} |\Omega|.
 \]
This gives \eqref{eq2.19exact}, and we conclude the construction of $w_1$. 

 \underline{For the inductive step}, fix some $k$ and assume we have found $w_k$. Since $w_k$ is piecewise affine, we can decompose countably $\Omega_{k,err} = \bigcup_j \Omega_j \cup \mathcal{N}$ such that $\mathcal{N}$ has measure zero and $w_k = l_{X_j,b_j}$ on each $\Omega_j$. 
 
 From the condition \eqref{eq:dust}, we have $X_j\in U$ for any $j$. We then apply the \Cref{reduc} with $\Omega_0=(0,1)^m, X_j\in U, b_j\in\R^d$ and $\varepsilon_0=2^{-(k+1)}$ and then apply the rescaling-covering and gluing arguments as above to obtain the piecewise affine map $g_j\in W^{1,1}(\Omega_j)\cap C^\alpha(\overline{\Omega_j})$ such that $g_j=l_{X_j,b_j}$ on $\partial\Omega_j$ and satisfies  for $\Omega_j^{err}\coloneqq \{x\in\Omega_j : \nabla g_j(x) \notin K\}$:
 \begin{equation} \label{eq:ind1}
     \int_{\Omega_j^{err}} (1+|\nabla g_j|)^p dx < 2^{-(k+1)}  |\Omega_j|,
 \end{equation}
 \begin{equation} \label{eq:ind2}
     \|g_j-l_{X_j,b_j}\|_{C^\alpha(\overline{\Omega_j})} < \delta 2^{-k-2},
 \end{equation}
 \begin{equation} \label{eq:ind4}
     \nabla g_j(x)\in U \quad \text{for a.e. } x\in \Omega_j^{err},
 \end{equation}
\begin{equation} \label{eq:ind3}
     |\{x\in\Omega_j : |\nabla g_j(x)|>t\}| \le M^p(1+|X_j|^p)|\Omega_j|t^{-p} \quad \text{for any } t>0.
 \end{equation}
 
%  \armin{If we have exact reducibility we also have
% \begin{equation} \label{eq:ind3exact}
%      |\{x\in\Omega_j : |\nabla g_j(x)|>t\}| \geq m^p |\Omega_j|t^{-p} \quad \text{for any } t \in (1,\infty),
%  \end{equation}
%  }
 Define $w_{k+1} : \Omega\to\R^d$ by
 \[
 w_{k+1}(x) = \begin{cases}
     g_j(x) & \text{if } x\in\Omega_j; \\
     w_k(x) & \text{if } x\in (\Omega\setminus\Omega_{k,err})\cup \mathcal{N}.
 \end{cases}
 \]
 By the gluing argument again, we obtain a piecewise affine map $w_{k+1}\in W^{1,1}(\Omega)\cap C^\alpha(\overline{\Omega})$ such that $w_{k+1}=l_{X_0,b}$ on $\partial\Omega$ and
 \begin{equation} \label{eq:inderror}
     \Omega_{k+1,err} \coloneqq  \bigcup_j \Omega_j^{err} \cup \mathcal{N}.
 \end{equation}
 From \eqref{eq:ind1}, we have
 \[
 \int_{\Omega_{k+1,err}} (1+|\nabla w_{k+1}|)^p dx = \sum_j \int_{\Omega_j^{err}} (1+|\nabla g_j|)^p dx <\sum_j 2^{-(k+1)}|\Omega_j| \le 2^{-(k+1)}|\Omega|,
 \]
which is \eqref{eq2.17} for $w_{k+1}$.

By the gluing argument and \eqref{eq:ind2}, we obtain
\begin{equation} \label{csseq}
    \|w_{k+1}-w_k\|_{C^\alpha(\overline{\Omega})}\le 2\sup_j \|g_j-l_{X_j,b_j}\|_{C^\alpha(\overline{\Omega_j})} < \delta 2^{-k-1}.
\end{equation}
Thus, it follows from \eqref{eq2.18} that
\[
\|w_{k+1}-l_{X_0,b}\|_{C^\alpha(\overline{\Omega})} \le \|w_{k+1}-w_k\|_{C^\alpha(\overline{\Omega})} + \|w_k-l_{X_0,b}\|_{C^\alpha(\overline{\Omega})} <\delta 2^{-k-1}+\delta (1-2^{-k}) = \delta (1-2^{-(k+1)}),
\]
which is \eqref{eq2.18} for $w_{k+1}$.

Clearly we have 
\[
\begin{split}
|\{x\in\Omega : |\nabla w_{k+1}(x)|>t\}| =& |\{x\in \Omega\setminus\Omega_{k,err} : |\nabla w_k(x)|>t\}| + \sum_j |\{x\in\Omega_j : |\nabla g_j(x)|>t\}| \\
\leq& |\{x\in \Omega : |\nabla w_k(x)|>t\}| + \sum_j |\{x\in\Omega_j : |\nabla g_j(x)|>t\}|.
\end{split}
\]
Thus, from \eqref{eq:ind3}, \eqref{eq2.19} and \eqref{eq2.17} (for $w_k$), we obtain (recall that $X_j = \nabla w_k$ on $\Omega_j$)
\begin{equation*}
    \begin{split}
        |\{x\in\Omega : |\nabla w_{k+1}(x)|>t\}| 
        % &= |\{x\in (\Omega\setminus\Omega_{k,err}) : |\nabla w_k(x)|>t\}| + \sum_j |\{x\in\Omega_j : |\nabla g_j(x)|>t\}| \\
        &\le M^p(1+|X_0|^p)|\Omega|t^{-p} \sum_{i=0}^{k-1} 2^{-i} + \sum_j M^p(1+|X_j|^p)|\Omega_j|t^{-p} \\
        &= M^p(1+|X_0|^p)|\Omega|t^{-p} \sum_{i=0}^{k-1} 2^{-i} + \sum_j M^pt^{-p} \int_{\Omega_j} (1+|\nabla w_k|^p) dx \\
        &= M^p(1+|X_0|^p)|\Omega|t^{-p} \sum_{i=0}^{k-1} 2^{-i} + M^pt^{-p} \int_{\Omega_{k,err}} (1+|\nabla w_k|)^p dx \\
        &\le M^p(1+|X_0|^p)|\Omega|t^{-p} \sum_{i=0}^{k-1} 2^{-i} + M^p t^{-p}\, |\Omega| 2^{-k} \\
        &\leq M^p(1+|X_0|^p)|\Omega| t^{-p} \sum_{i=0}^k 2^{-i},
    \end{split}
\end{equation*}
which is \eqref{eq2.19} for $w_{k+1}$.

If we have an exact reduction, we have \eqref{eq2.19exact} for $w_k$. Observe that $\Omega_{k+1,err} \subset \Omega_{k,err}$ (up to a null set) and $\nabla w_k = \nabla w_{k+1}$ on $\Omega \setminus \Omega_{k+1,err}$ hence

\[
|\{x\in\Omega \setminus \Omega_{err,k+1} : |\nabla w_{k+1}(x)|>t\}| \geq 
|\{x\in\Omega \setminus \Omega_{err,k} : |\nabla w_{k}(x)|>t\}| \geq \frac{1}{100} m^p t^{-p}
\]
for any $t \geq 1$. This gives \eqref{eq2.19exact} for $w_{k+1}$.

%  \iarmin{until here}
% \ToDo As for the exact reducibility, we observe with \eqref{eq2.17} and \eqref{eq:ind3exact} for any $t \in (1,\infty)$, if we choose $\lambda < \frac{1}{2}\min\{m^p,1\}$ and Chebyshev inequality,
% \[
% \begin{split}
% |\{x\in\Omega\setminus\Omega_{err,k+1} : |\nabla w_{k+1}(x)|>t\}| 
% \ge |\{x\in \Omega\setminus\Omega_{err,k} : |\nabla w_k(x)|>t\}| 
% \ge \frac{1}{100}m^p|\Omega|t^{-p}
% %+ \sum_j |\{x\in\Omega_j : |\nabla g_j(x)|>t\}| 
% %-|\{x\in \Omega_{err,k} : |\nabla w_k(x)|>t\}|\\
% %\geq& m^p |\Omega|t^{-p} \ToDo \sum_{i=0}^{k-1} 2^{-i} + \sum_j m^p |\Omega_j|t^{-p} -t^{-p} \int_{\Omega_{err;k}} |\nabla w_k|^p\\
% %\geq& m^p|\Omega|t^{-p}\ToDo \sum_{i=0}^{k-1} 2^{-i} + m^p |\Omega_{err,k}|t^{-p} -t^{-p}|\Omega|\lambda2^{-k} \\
% %\geq & t^{-p} |\Omega| \brac{m^p\ToDo \sum_{i=0}^{k-1} 2^{-i} + m^p  - m^p \lambda 2^{-k} -\lambda 2^{-k} }\\
% \end{split}
% \]
% \iarmin{something is fishy this goes to infinity?}
% \ToDo which is \eqref{eq2.19exact} for $w_{k+1}$.

The condition \eqref{eq:dust} for $w_{k+1}$ follows from \eqref{eq:ind4}, \eqref{eq:inderror}, and \eqref{eq:dust} for $w_k$.
\end{proof}
\begin{proof}[Proof of \Cref{th:thm4.1}]

We have a sequence of maps $(w_k)$ as in \Cref{la:exactPART}. Define $w:\Omega\to\R^d$ by 
\[
w(x) = \lim_{k\to\infty} w_k(x).
\]
Note that since $\Omega_{k+1,err}\subset \Omega_{k,err}$ for any $k$ and $|\Omega_{k,err}|\to 0$ as $k\to\infty$ due to \eqref{eq2.17}, we know that this limit exists for almost every $x\in\Omega$. Moreover,
$\Omega_{err} \coloneqq  \bigcap_k \Omega_{k,err}$ satisfies 
$|\Omega_{err}|=\lim_{k\to\infty} |\Omega_{k,err}|=0$. Since $w=w_k$ on $\Omega \setminus \Omega_{k,err}$, $w$ is a piecewise affine map with $w=l_{X_0,b}$ on $\partial\Omega$. We also have $\nabla w(x)\in K$ for a.e. $x\in\Omega$. Lastly, \eqref{csseq} implies that $(w_k)$ is a Cauchy sequence in $C^\alpha(\overline{\Omega})$, so $(w_k)$ converges to $w$ in $C^\alpha(\overline{\Omega})$. It follows that
\[
\|w-l_{X_0,b}\|_{C^\alpha(\overline{\Omega})} = \lim_{k\to\infty} \|w_k-l_{X_0,b}\|_{C^\alpha(\overline{\Omega})} \leq \delta.
\]
From \eqref{eq2.19}, we have for each $k$,
\[
|\{x\in\Omega : |\nabla w_k(x)|>t\}| \le 2M^p(1+|X_0|^p)|\Omega|t^{-p}.
\]
In particular $\nabla w_k \in L^{(p,\infty)}(\Omega)$ (the weak Lorentz space)  with a uniform bound, thus up to a subsequence $w_k$ weakly converges to $w$ in $W^{1,q}(\Omega)$ for any $q < p$.

Observe that for all $t > 0$ and a.e. $x \in \Omega$
\[
\chi_{\{x\in\Omega : |\nabla w(x)|>t\}} \leq \liminf_{k \to \infty } \chi_{\{x\in\Omega : |\nabla w_k(x)|>t\}}. 
\]
By Fatou's lemma we find 
\[
   |\{x\in\Omega : |\nabla w(x)|>t\}| \leq \liminf_{k \to \infty}    |\{x\in\Omega : |\nabla w_k(x)|>t\}| \leq 2M^p(1+|X_0|^p)|\Omega|t^{-p}
\]

% on $\Omega$ and $|\chi_{\{x\in\Omega : |\nabla w_k(x)|>t\}}| \le 1$ for any $k$, which $1\in L^1(\Omega)$. By the Dominated Convergence theorem, 
% \[
% \begin{split}
%     |\{x\in\Omega : |\nabla w(x)|>t\}| = \int_\Omega \chi_{\{x\in\Omega : |\nabla w(x)|>t\}} dx &= \lim_{k\to\infty} \int_\Omega \chi_{\{x\in\Omega : |\nabla w_k(x)|>t\}} dx \\
%     &= \lim_{k\to\infty} |\{x\in\Omega : |\nabla w_k(x)|>t\}| \\
%     &\le 2M^p(1+|X_0|^p)|\Omega|t^{-p}
% \end{split}
% \]
for any $t>0$. Thus, we established \eqref{eq:wewant4.2c}.

% This implies $\nabla w \in L^{(p,\infty)}(\Omega)$.
% % Hence, for $q<p$,
% % \[
% % \begin{split}
% %     \int_\Omega |\nabla w|^q dx &= \int_0^\infty qt^{q-1} |\{x\in\Omega : |\nabla w(x)|>t\}| dt \\
% %     &= \int_0^1 qt^{q-1} |\{x\in\Omega : |\nabla w(x)|>t\}| dt + \int_1^\infty qt^{q-1} |\{x\in\Omega : |\nabla w(x)|>t\}| dt \\
% %     &\le \int_0^1 qt^{q-1} |\Omega| dt + 2qM^p(1+|X_0|^p)|\Omega| \int_1^\infty t^{q-p-1} dt \\
% %     &=|\Omega| + 2\frac{q}{q-p}M^p(1+|X_0|^p)|\Omega| <\infty.
% % \end{split}
% % \]
% % This shows that $w\in W^{1,q}(\Omega)$ for any $q<p$. 

% In particular, $w\in W^{1,1}(\Omega)$. This concludes the desired results.

If the reduction is \emph{exact}, we observe that for any $k \in \N$, by \eqref{eq2.19exact}
\[
\bigl|\{x\in\Omega\colon \ |\nabla w(x)|>t\}\bigr| \geq
\bigl|\{x\in\Omega\setminus \Omega_{k,err}\colon \ |\nabla w_k(x)|>t\}\bigr| \geq \frac{1}{100} m^p |\Omega|t^{-p}  \quad \text{for any } t \in (1,\infty).  
\]
This gives \eqref{eq:wewant4.2c2} and we can conclude.
\end{proof}

\section{The staircase laminate criterion --- Proof of Theorem~\ref{th:thm4.3}}\label{s:laminatecrit}
We begin with the basic definitions. The only difference to before is the notion of ``construction steps'' being confined to the set $U$.
\begin{definition}[Elementary splitting and elementary splitting with construction steps]
Given probability measures $\nu,\mu\in\mathcal{P}(\R^{d\times m})$ we say that $\mu$ is obtained from $\nu$ \emph{by elementary splitting} if $\nu$ has the form $\nu=\lambda\delta_X+(1-\lambda)\tilde\nu$ for some $\tilde\nu\in\mathcal{P}(\R^{d\times m})$, there exist matrices $B,B'\in\R^{d\times m}$ and $\lambda'\in(0,1)$ such that $X=\lambda' B+(1-\lambda')B'$ and $\rank(B'-B)=1$, and moreover,
\[\mu=\lambda\big(\lambda'\delta_B+(1-\lambda'\big)\delta_{B'})+(1-\lambda)\tilde\nu.\]
Let $U\subset\R^{d\times m}$. If $X\in U$ we say that this elementary splitting of $\nu$ \emph{originates from $U$} or \emph{has construction step in $U$}. 
\end{definition}

\begin{definition}[Laminate of finite order]
A probability measure $\nu\in\mathcal{P}(\R^{d\times m})$ is called \emph{a laminate of finite order} if it can be obtained by starting from a Dirac mass and applying elementary splitting a finite number of times. The set of all laminates of finite order on $\R^{d\times m}$ is denoted by $\mathcal{L}(\R^{d\times m})$.
\end{definition}

\begin{definition}[Laminate of finite order with construction steps]\label{def:laminatewithconstructionsteps}
Let $\nu\in \mathcal{L}(\R^{d\times m})$ and $U\subset \R^{d\times m}$. We call $\nu$ a \emph{laminate of finite order $\nu\in\mathcal{L}(\R^{d\times m})$ with construction steps in $U$} if every elementary splitting originates from an element in $U$.
\end{definition}

%\iarmin{Precise definition} Laminate of finite order {with construction steps in $U$} cf. \cite[Section 2.1]{KMSX24} {meaning that every elementary splitting originates from an element in $U$}\Armin{this does \emph{not} mean that $\supp \nu \in U$!!!}.
%\end{definition}
Observe that construction steps in $U$ does not mean the support of $\nu$ belongs to $U$. Indeed, for a laminate of finite order $\nu$, even with construction steps in $U$, in general $\supp\nu \not \subset U$. However, the baricenter of $\nu$ must belong to $U$.

\begin{definition}[Barycenter]
Let $\nu\in\mathcal{L}(\R^{d\times m})$ be of the form $\nu=\sum_{i=1}^N\lambda_i\delta_{X_i}$. The \emph{barycenter} of $\nu$ is defined as the sum $\sum_{i=1}^N\lambda_i X_i$.
%and will be denoted as $\bar\nu$.
\end{definition}

\begin{definition}[Staircase laminate, {\cite[Proposition 3.1]{KMSX24}}]\label{defstair}
Let $K\subset\R^{d\times m}$ and $X_0\notin K$. Suppose that there exists a sequence of pairwise different matrices $X_n\in\R^{d\times m}\setminus K$, $n=0,1,2,\ldots$ called \emph{soaring sequence}, a sequence of probability measures $\mu_n\in\mathcal{P}(K)$ supported in $K$ such that each of $\mu_n$ is a finite sum of Dirac masses, and scalars $\gamma_n\in (0,1)$ such that
\begin{enumerate}
    \item for each $n\in\N$, the probability measure (finite sum of Dirac masses)
    \[\omega_n\coloneqq(1-\gamma_n)\mu_n+\gamma_n\delta_{X_n}\]
    is a laminate of finite order with barycenter $X_{n-1}$;
    %$\overline{\omega_n}=X_{n-1}$ if we have this indication in the definition
    \item the sequence $|X_n|$ is strictly monotone increasing with $\lim_{n\to\infty}|X_n|=\infty$;
    \item $\lim_{n\to\infty}\beta_n=0$, where $\beta_n\coloneqq\prod_{k=1}^n \gamma_k$, $\beta_0=1$.
\end{enumerate}
Define the probability measures $\nu^N$, $N=1,2,3,\ldots$, by iteratively replacing $\delta_{X_{n-1}}$ by $\omega_n$ for $1\leq n\leq N$, i.e., by 
\[\nu^N=\sum_{n=1}^N \beta_{n-1}(1-\gamma_n)\mu_n+\beta_N\delta_{X_N}.\]
Then, $\nu^N$ is a laminate of finite order with $\supp\nu^N\subset K\cup\{X_N\}$ and barycenter $X_0$. Moreover, for any Borel set $E\subset\R^{d\times m}$, the limit
\[\lim_{N\to\infty}\nu^N(E)=\nu^\infty(E)\]
exists and defines a probability measure $\nu^\infty$ with $\supp\nu^\infty\subset K$ and barycenter $X_0$. This probability measure $\nu^\infty$ will be called the \emph{staircase laminate}.
%\iarmin{copy the definition from the 4 authors, maybe state more clear that $\mu_n$ is sum of dirac masses. Just replace $A_n$ by $X_n$ and call it the soaring sequence}
\end{definition}
\begin{definition}[Staircase laminate with construction steps in $U$ supported in $K$]\label{def:SLKU}
Let $U,K\subset\R^{d\times m}$.
%, $K\cap U=\emptyset$.
We say that a staircase laminate $\nu\in\mathcal{P}(\R^{d\times m})$ belongs to $SL(K,U)$ if the soaring sequence of $\nu$ is a sequence of laminates of finite order with construction steps in $U$ (in particular $\omega_n$ is a laminate of finite order with construction steps in $U$ starting from $\delta_{X_{n-1}}$), and $\supp \nu \in K$.
\end{definition}

The lemma below is adapted from \cite[Lemma~2.1]{KMSX24}. 
In the original formulation, the conclusion stated in part~\eqref{wiggle:error} is not written explicitly, although it follows directly from the construction given there. 
Since this additional property will play a crucial role later, we include it here and present the argument in full detail.
This lemma serves as a standard building block in convex integration.

Informally, the lemma produces a map $u$ whose gradient takes the values $X_1$ and $X_2$ on most of the domain, with a small exceptional set $\Omega_{\mathrm{err}}$ on which other values may appear. We refer to the set $\nabla w(\Omega_{\mathrm{err}})$ as the \emph{dust}. The key point in part~\eqref{wiggle:error} is that for a suitable open set $U$ we have $\nabla w (x) \in U$ for almost all $x\in \Omega_{err}$, see \eqref{eq:dustinU}. The choice of this neighborhood will be essential in our proof of Theorem~\ref{th:AFS}.

\begin{lemma}[Wiggle lemma]\label{la:basicwiggle}
Let $X, X_1,X_2 \in \R^{d\times m}$ be matrices such that
\[
 \rank(X_1-X_2) = 1 \quad \text{and } X = \lambda X_1 + (1-\lambda) X_2
\]
for some $\lambda \in (0,1)$.

Fix an arbitrary $b \in \R^d$, $\alpha \in (0,1)$, $\eps > 0$, and a regular domain $\Omega \subset \R^m$. There exists a piecewise affine Lipschitz map $w\colon \Omega \to \R^d$ such that

\begin{enumerate}
\item \label{wiggle:boundary}$w(x) = Xx+b$ on $\partial \Omega$;
\item \label{wiggle:maxnorm}$|\nabla w(x)| \leq \max\{|X_1|,|X_2|\}$ for a.e. $x \in \Omega$;
\item \label{wiggle:pmeps1}$(1-\eps) \lambda |\Omega| \leq |\{x \in \Omega\colon \nabla w(x)= X_1\}| \leq (1+\eps) \lambda |\Omega|$;
\item \label{wiggle:pmeps2}$(1-\eps) (1-\lambda) |\Omega| \leq |\{x \in \Omega\colon \nabla w(x)= X_2\}| \leq (1+\eps) (1-\lambda) |\Omega|$.
\item \label{wiggle:error}
Define
\[
\Omega_{\mathrm{err}} \coloneqq \{ x \in \Omega \colon  \nabla w(x) \notin \{X_1, X_2\} \}.
\]
We refer to the image $\nabla w(\Omega_{\mathrm{err}})$ as the \emph{dust}. Then
\[
|\Omega_{\mathrm{err}}| \leq \varepsilon\,|\Omega|
\]
and
\begin{equation}\label{eq:nablaumAll1}
|\nabla w(x) - X| < \varepsilon \qquad \text{for a.e. } x \in \Omega_{\mathrm{err}}.
\end{equation}
In particular, if $U \subset \mathbb{R}^{d\times m}$ is open and $X \in U$, then we can ensure that
\begin{equation}\label{eq:dustinU}
\nabla w(x) \in U \qquad \text{for a.e. } x \in \Omega_{\mathrm{err}}.
\end{equation}
\item \label{eq:wiggle:Calphaclose} $\|f-(X x + b)\|_{C^\alpha(\overline{\Omega})} < \eps$.

%In particular, if for some $\delta, \bar{a},\gamma > 0$ we have $X \in \mathscr{U}_{\bar{a},\delta,\gamma}$ we can ensure that $\nabla u(x) \in \mathscr{U}_{\bar{a},\gamma,\delta}$ for a.e. $x\in\Omega_{err}$.
\end{enumerate}
\end{lemma}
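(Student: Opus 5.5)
The construction follows the classical one-dimensional laminate construction of \cite[Lemma~2.1]{KMSX24}. The only new point is to track the values of the gradient on the cutoff region; \eqref{eq:dustinU} then follows from \eqref{eq:nablaumAll1} by taking $\varepsilon$ smaller than $\operatorname{dist}(X,\R^{d\times m}\setminus U)$.

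We write $X_1 - X_2 = a\otimes n$ with $a\in\R^d$ and $|n|=1$. Let $h\colon\R\to\R$ be the $1$-periodic sawtooth with $h(0)=0$, $h' = 1-\lambda$ on a set of proportion $\lambda$ of each period, and $h'=-\lambda$ on the rest. Set $v_\eta(x) \coloneqq Xx + b + \eta\, a\, h(x\cdot n/\eta)$. Then $\nabla v_\eta \in \{X_1, X_2\}$, and $\|v_\eta - (Xx+b)\|_{C^0}\lesssim \eta|a|$. Moreover $\nabla v_\eta - X$ is bounded by $|a|$, so the $C^\alpha$ seminorm of $v_\eta - (Xx+b)$ is of order $\eta^{1-\alpha}|a|$.

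First we reduce to a cube, or better a simplex-free domain, by the standard covering argument of \cite[Section~2.1]{KMSX24}. We cover $\Omega$ up to a null set by disjoint scaled copies $r_i Q + x_i$ of a fixed open cube $Q$ with one face orthogonal to $n$. We solve the problem on $Q$ with the affine boundary data $Xx + b$ and glue by rescaling, exactly as in the proof of \Cref{la:exactPART}. Rescaling preserves gradients, measure ratios and the $C^0$ error, and it improves the $C^\alpha$ error by the factor $r_i^{1-\alpha}$. Hence it suffices to treat $Q$.

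On $Q$ we fix the boundary values by a piecewise affine cutoff. We choose a small $\theta>0$ and a piecewise affine function $\phi\colon Q\to[0,1]$ which is $1$ outside a $\theta$-neighbourhood of $\partial Q$, vanishes on $\partial Q$, and has $|\nabla\phi|\lesssim \theta^{-1}$. One can take $\phi(x) = \min\{1, \operatorname{dist}_\infty(x,\partial Q)/\theta\}$, which is piecewise affine on a cube. Set $w = Xx+b + \phi\,\eta\, a\, h(x\cdot n/\eta)$. Then $w = Xx+b$ on $\partial Q$, and $\nabla w = \nabla v_\eta$ wherever $\phi=1$. On the boundary layer $\nabla w = X + \phi\,a\otimes n\, h'(\cdot) + \eta h(\cdot)\, a\otimes\nabla\phi$.

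The main obstacle is the pointwise control \eqref{eq:nablaumAll1} together with \eqref{wiggle:maxnorm} in the layer. The term $\phi\, h'\, a\otimes n$ is not small in general, because it is a convex combination of $0$ and $X_i - X$. The fix is to not let the dust include these values. We choose the layer so that its gradient lies either in $\{X_1,X_2\}$ or near $X$. To do this, we take the cutoff to be a step-like piecewise affine function in the variable $x\cdot n$ direction only near the faces orthogonal to $n$, and to act through the sawtooth amplitude elsewhere. Concretely, we replace $\phi\, h$ by $\min\{h, \psi\}$ (with the appropriate sign convention, working with $h\ge 0$ after shifting $b$), where $\psi(x)=\operatorname{dist}_\infty(x,\partial Q)\cdot c$ for a small slope $c$.

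Where $\min\{h,\psi\}=h$, the gradient is $X_1$ or $X_2$. Where it equals $\psi$, the gradient is $X + \eta$-free $c\, a\otimes\nabla \operatorname{dist}_\infty$, which is within $c|a|$ of $X$; this is less than $\varepsilon$ for small $c$. Because $h\le \eta$, the set where $\psi<h$ is contained in a layer of width $\eta/c$, so its measure is at most $\varepsilon|Q|$ once $\eta\ll c\varepsilon$. The gradient there is also a convex-combination-free perturbation of $X$, so its norm is at most $\max\{|X_1|,|X_2|\}$ for $c$ small, since $|X|<\max|X_i|$ unless $X_1=X_2$.

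The measure estimates \eqref{wiggle:pmeps1}–\eqref{wiggle:pmeps2} follow since the sawtooth phases occupy proportions $\lambda$ and $1-\lambda$ up to $O(\eta)$ boundary effects plus the $\varepsilon$-layer. The $C^\alpha$ bound \eqref{eq:wiggle:Calphaclose} follows from $\|w-(Xx+b)\|_{C^0}\le \eta|a|$ and a Lipschitz bound of order $|a|$, by interpolation, for $\eta$ small.
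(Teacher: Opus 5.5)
Your final $\min$-construction, $w=Xx+b+a\min\{\eta h(x\cdot n/\eta),\,c\,\mathrm{dist}(x,\partial Q)\}$, is correct and is essentially the paper's proof. The paper also sets $Xx+b+\xi\min\{\text{small-slope affine cutoffs},\tfrac1N h(N\langle x,\zeta\rangle)\}$, so that the dust gradients are $X\pm\xi\otimes\zeta_i$ with $|\zeta_i|<r$, and then covers $\Omega$ by rescaled copies. The differences are cosmetic: the paper gets the small slope from an elongated parallelepiped with small $\zeta_i$ and needs no cutoff in the $\zeta$-direction, since the faces $\langle x,\zeta\rangle=\pm1$ hit zeros of $h$; you instead use a small constant $c$ on a cube and obtain the $C^\alpha$ bound on $Q$ directly by interpolation.
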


\begin{proof}
Since $X_2 - X_1$ is rank one, we can write for some $\xi \in \R^d \setminus \{0\}$, $\zeta \in \S^{m-1} \subset \R^m$,
\[
 X_1 - X_2 = \xi \otimes \zeta.
\]
We then have
\[
 X_1 -X = (1-\lambda) (X_1 - X_2) = (1-\lambda)\xi \otimes \zeta
\]
and
\[
 X_2 -X = -\lambda \xi \otimes \zeta.
\]
Let $r \in (0,1)$ be a small number to be chosen later. Extend $\zeta$ to obtain a basis $(\zeta,\zeta_2,\ldots,\zeta_m)$ of $\R^m$ such that $|\zeta_i|<r$ for all $i=2,3,\ldots,m$.
%, we define also $\zeta_{n+1}\coloneqq-\zeta_{2}, \ldots,\zeta_{2n-1}\coloneqq-\zeta_{m}$.
Then, the set
\[
\begin{split}
 \Omega_0 \coloneqq
 %& \left \{x \in \R^m: \langle x,\zeta_i\rangle > -1\text{ for $i=2,3,\ldots,2n-1$ and } |\langle x,\zeta\rangle| < 1 \right \}\\
 \left \{x \in \R^m\colon  |\langle x,\zeta_i\rangle| < 1\text{ for $i=2,3,\ldots,m$ and } |\langle x,\zeta\rangle| < 1 \right \}
 \end{split}
\]
is a regular domain, and it is convex with $0 \in \Omega_0$.
Define $f_N\colon\Omega_0\to\R$ as follows
\[
 f_N(x) \coloneqq 
 %\min\left\{ \min_{i\in\{2,3,\ldots,2n-1\}} 1+ \langle x,\zeta_i\rangle, \frac{1}{N} h(N \langle x,\zeta\rangle) \right \},
 \min\left\{ \min_{i\in\{2,3,\ldots,m\}} 1+ \langle x,\zeta_i\rangle, \min_{i\in\{2,3,\ldots,m\}} 1 - \langle x,\zeta_i\rangle, \frac{1}{N} h(N \langle x,\zeta\rangle) \right \},
\]
where $N\in\N$ and $h\colon \R \to [0,\infty)$ is the typical (Lipschitz) sawtooth function extended periodically to $\R$, i.e., $h(x) = h(x-1)$ for all $x \in \R$, $h(0) = h(1) = 0$ and
\begin{equation}\label{eq:valuesofh'}
|\{t \in [0,1]\colon \quad h'(t) = 1-\lambda \}| = \lambda, \quad |\{t \in [0,1]\colon \quad h'(t) = -\lambda \}| = 1-\lambda.
\end{equation}
Then $f_N$ is a piecewise affine Lipschitz function, $f_N=0$ on $\partial\Omega_0$, and for a.e. $x\in\Omega_0$,
\[
\begin{split}
 \nabla f_N(x) &\in 
%  $\{\zeta_2,\zeta_3,\ldots, \zeta_{2n-1},h'(N\langle x,\zeta\rangle)\zeta\}\\
%  &\quad = 
 \{\pm \zeta_2,\pm \zeta_3,\ldots, \pm \zeta_{m},h'(N\langle x,\zeta\rangle)\zeta\}.
\end{split}
 \]
Since $h'\in\{1-\lambda,-\lambda\}$ a.e. and $X_1-X=(1-\lambda)\xi\otimes\zeta$, and $X_2-X=-\lambda\xi\otimes\zeta$ we obtain for a.e. $x\in\Omega_0$
\[
 \xi \otimes \nabla f_N (x) \in \{\pm\xi \otimes \zeta_2,\pm\xi \otimes \zeta_3,\ldots,\pm\xi\otimes\zeta_m, X_1-X,X_2-X\}.
\]
Setting
\[
g_N(x) \coloneqq Xx + b+ \xi f_N(x)
\]
we have for a.e. $x\in\Omega_0$
\begin{equation}\label{nablagerror}
 \nabla g_N(x) \in \{X\pm\xi \otimes \zeta_2,X\pm \xi \otimes \zeta_3,\ldots,X\pm\xi \otimes \zeta_m, X_1,X_2\}.
\end{equation}

We observe that $g_N$ now satisfies the claim of the Lemma if we replace $\Omega$ by $\Omega_0$, except possibly for \eqref{eq:wiggle:Calphaclose}.

Indeed, property \eqref{wiggle:boundary} clearly holds w.r.t to $g_N$ and $\Omega_0$ for any $N$. Moreover, 
\[
 |X| = |\lambda X_1 + (1-\lambda)X_2| < \lambda|X_1| + (1-\lambda)|X_2|\le \max\{|X_1|, |X_2|\} 
\]
and for any $i\in\{2,\ldots,m\}$
\[
 |X\pm \xi\otimes \zeta_i| \le |X|+|\xi||\zeta_i| <|X|+r|\xi|, 
\]
thus choosing $r < \frac{\max\{|X_1|, |X_2|\}-|X|}{|\xi|}$ we obtain property \eqref{wiggle:maxnorm} w.r.t. to $g_N$ and $\Omega_0$.

Now we focus on properties \eqref{wiggle:pmeps1} and \eqref{wiggle:pmeps2}.

It is possible to choose $N=N(r)$ large enough so that 
\begin{equation}\label{eq:hsmallerthanthesum}
\frac{1}{N}h(N \langle x,\zeta\rangle)<\min\left\{\min_{2\leq i\leq m}(1+\langle x,\zeta_i\rangle),\min_{2\leq i\leq m}(1-\langle x,\zeta_i\rangle)\right\}\quad \text{in } (1-r)\Omega_0.
\end{equation}
This follows from the fact that
\[(1-r)\Omega_0\coloneqq\{x\in\R^m\colon 1-r>\langle x,\zeta_i\rangle>r-1\text{ for }i=2,3,\ldots,m\text{ and }|\langle x,\zeta\rangle|<1-r\}.\]

In particular we have 
\[
g_N(x) = Xx+b+\frac{1}{N}h(N \langle x,\zeta\rangle) \text{ in $(1-r)\Omega_0$}.
\]
and thus 
\[
\nabla g_N(x) \in \{X_1,X_2\} \quad \text{in $(1-r)\Omega_0$}
\]
And, taking $N=N(r)$ even larger, we can ensure 
\[
\begin{split}
\abs{\{x \in \Omega_0: \nabla g_N(x) = X_1\}}
&\geq \abs{\{x \in (1-r)\Omega_0: \nabla g_N(x) = X_1\}}\\ 
&= \abs{\{x \in (1-r)\Omega_0: h'(N\langle x,\zeta\rangle) = 1-\lambda\}}\\
&\geq \lambda (1-r)^2 \abs{\Omega_0}
\end{split}
\]
and similarly
\[
\begin{split}
\abs{\{x \in \Omega_0: \nabla g_N(x) = X_2\}}
\geq \abs{\{x \in (1-r)\Omega_0: h'(N\langle x,\zeta\rangle) = -\lambda\}}
\geq (1-\lambda) (1-r)^2 \abs{\Omega_0}
\end{split}
\]
Since $X_1 \neq X_2$ this also implies
\[
\begin{split}
\abs{\{x \in \Omega_0: \nabla g_N(x) = X_1\}}
&\leq |\Omega_0| - \abs{\{x \in \Omega_0: \nabla g_N(x) = X_2\}}\\ 
&\leq \brac{1- (1-\lambda) (1-r)^2} \abs{\Omega_0}\\
&= \lambda \frac{\brac{1- (1-\lambda) (1-r)^2}}{\lambda} \abs{\Omega_0}\\
\end{split}
\]
and
\[
\begin{split}
\abs{\{x \in \Omega_0: \nabla g_N(x) = X_2\}}
\leq |\Omega_0| - \abs{\{x \in \Omega_0: \nabla g_N(x) = X_1\}} 
% \leq& \brac{1- \lambda (1-r)^2} \abs{\Omega_0}\\
\leq (1-\lambda)\frac{\brac{1- \lambda (1-r)^2}}{1-\lambda} \abs{\Omega_0}
\end{split}
\]
So if we take $r$ so small (and $N=N(r)$ large enough accordingly) so that 
\[
\frac{\brac{1- (1-\lambda) (1-r)^2}}{\lambda}, \frac{\brac{1- \lambda (1-r)^2}}{1-\lambda} \leq 1+\eps
\]
and 
\[
(1-r)^2 \geq (1-\eps),
\]
then properties \eqref{wiggle:pmeps1} and \eqref{wiggle:pmeps2} are satisfied w.r.t $\Omega_0$, and $g_N$.

Next we establish property \eqref{wiggle:error}, so we examine what happens on the set 
\[
 \Omega_{0,err} \coloneqq \left \{x \in \Omega_0\colon  \nabla g_N(x) \not \in \{X_1,X_2\} \right \}.
\]
From properties \eqref{wiggle:pmeps1} and \eqref{wiggle:pmeps2} it is clear that $|\Omega_{0,err}| \leq \eps |\Omega_0|$.  
% By \eqref{wiggle:pmeps1} and \eqref{wiggle:pmeps2} we get that $|\Omega_{0,err}|< \eps |\Omega_0|$. 
Moreover, from \eqref{nablagerror} we deduce that 
\[
 \nabla g_N(x) \in \left \{X\pm\xi \otimes \zeta_2,X\pm\xi \otimes \zeta_3,\ldots,X\pm\xi \otimes \zeta_m \right \} \quad \text{for a.e. $x\in\Omega_{0,err}$}.
\]
Thus,
\[
 |\nabla g_N(x) - X| \le \max_{2\le i \le m}|\xi||\zeta_i| \leq r|\xi| \quad \text{for a.e. $x\in\Omega_{0,err}$}.
\]
So taking $r$ possibly even smaller, ensuring that $r < \frac{\eps}{|\xi|}$, we obtain  \eqref{eq:nablaumAll1}. This ensures property \eqref{wiggle:error} for $g_N$ on $\Omega_0$.

This shows that $g_N$ satisfies the conclusion of the lemma in $\Omega_0$, except possibly for  \eqref{eq:wiggle:Calphaclose}.
  
% Lastly, observe that if $X\in U$ 
% %$\mathscr{U}_{\bar{a},\delta,\gamma}$ is an open set for $\bar{a},\delta,\gamma > 0$. It follows that if $X \in \mathscr{U}_{\bar{a},\delta,\gamma}$ 
% we can again choose $r>0$ small enough such that 
% %$X \in \mathscr{U}_{\bar{a},\delta,\gamma}$ implies
% \eqref{eq:nablaumAll1} implies
% %\[
% % \nabla g_N \in \mathscr{U}_{\bar{a},\delta,\gamma}\quad \text{a.e. in $\Omega_{0,err}$}.
% %\]
% \[
%  \nabla g_N \in U\quad \text{a.e. in $\Omega_{0,err}$}.
% \]
In order to find $w$ on $\Omega$ (including \eqref{eq:wiggle:Calphaclose}), we cover $\Omega$ by countably many tiny rescaled copies of $\Omega_0$ and set $w$ to be a suitably rescaled version of $f_N$ --- this is the argument from the beginning of the proof of \Cref{la:exactPART}, see also the \emph{rescaling and covering argument} in \cite[Section 2.1]{KMSX24}.
\end{proof}
Iterating \Cref{la:basicwiggle} we obtain.

\begin{lemma}\label{la:wiggleiter}
Let $U \subset \R^{d\times m}$ be open.

Assume that $\nu$ is a laminate of finite order with construction steps in  $U$ (see \Cref{def:laminatewithconstructionsteps}) and with barycenter $X_0 \in U$,
%which is constructed similarly to \cite[Section 2.1]{KMSX24}, i.e.,
\[
 \nu = \sum_{j=1}^J \lambda_j \delta_{X_j} \text{ where }\lambda_j>0, \sum_{j=1}^J\lambda_j=1, \text{ and } X_k\neq X_j\text{ for }k\neq j.
 \]
%meaning that every elementary splitting originates from an element in $U$.\footnote{This does not mean that every $X_j\in U$.}
%\Armin{this does \emph{not} mean that $X_j \in \mathscr{U}_{\bar{a},\delta,\gamma}$!!!}.

Then, given any $b\in\R^d$, $\eps>0$, $\alpha \in (0,1)$ and any regular domain $\Omega\subset\R^m$, there exists a piecewise affine Lipschitz map $w\colon\Omega\to\R^d$ such that

\begin{enumerate} 
\item $w(x)=X_0 x+b$ for $x\in\partial \Omega$;
\item \label{max} $\|\nabla w\|_{L^\infty} \leq \max_{1\le j\le J} |X_j|$;
\item \label{epsj} we have
\[
 (1-\eps) \lambda_j |\Omega| \leq |\{x \in \Omega\colon  \nabla w(x) = X_j\}| \leq (1+\eps) \lambda_j |\Omega| \quad \text{ for } j=1,\ldots,J;
\]
% \item $\nabla u \in U$ a.e. in $\Omega_{err}$ where
% \[
%  \Omega_{err} \coloneqq \Omega \setminus \bigcup_{j=1}^J \{x\colon  \nabla u(x) = X_j\}.
% \]
\item\label{it:afterproof} Define
\[
\Omega_{\mathrm{err}} \coloneqq \Omega \setminus \bigcup_{j=1}^J \{x\colon  \nabla w(x) = X_j\}.
\]
Then,
\begin{equation}\label{afterproof}
|\Omega_{\mathrm{err}}| < \varepsilon\,|\Omega|
\end{equation}
and the dust is in $U$, i.e., $\nabla w \in U$ a.e.\ in $\Omega_{err}$.
\item\label{it:Calphaclose}$\|w-(X_0x + b)\|_{C^\alpha(\Omega)} \leq \eps$. 
\end{enumerate}
\end{lemma}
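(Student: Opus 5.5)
We argue by induction on the number $n$ of elementary splittings used to build $\nu$ from $\delta_{X_0}$. For $n=0$ we have $\nu=\delta_{X_0}$ and take $w=X_0x+b$; the error set is empty. For $n=1$ the statement is exactly \Cref{la:basicwiggle} applied with $X=X_0\in U$, $X_1,X_2$ the two atoms. Property \eqref{max} follows from \eqref{wiggle:maxnorm}, property \eqref{epsj} from \eqref{wiggle:pmeps1}--\eqref{wiggle:pmeps2}, the dust property from \eqref{eq:dustinU}, and the H\"older closeness from \eqref{eq:wiggle:Calphaclose}.

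For the inductive step, write $\nu$ as obtained from a laminate $\tilde\nu=\sum_{j}\tilde\lambda_j\delta_{\tilde X_j}$ of order $n-1$ with construction steps in $U$ and barycenter $X_0$. The last splitting replaces one atom $\tilde X_{j_0}=\lambda'B+(1-\lambda')B'$ with $\rank(B-B')=1$. Since the construction steps lie in $U$, we have $\tilde X_{j_0}\in U$. By the induction hypothesis, with a smaller parameter $\eps_1$, we get $\tilde w$ on $\Omega$ with the listed properties for $\tilde\nu$. The set $\{\nabla\tilde w=\tilde X_{j_0}\}$ is, up to a null set, a countable union of regular domains $\Omega_k$ on which $\tilde w$ is affine with gradient $\tilde X_{j_0}$ (piecewise affine structure, \Cref{piecewiseaf}). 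On each $\Omega_k$ we apply \Cref{la:basicwiggle} with $X=\tilde X_{j_0}\in U$, $X_1=B$, $X_2=B'$, the same affine boundary data, parameter $\eps_2$, and $C^\alpha$-closeness $\eps_2$. We then glue these pieces into $\tilde w$ to define $w$; the gluing argument of \cite[Section 2.1]{KMSX24} gives a piecewise affine Lipschitz map with the same boundary values. It also gives $\|w-\tilde w\|_{C^\alpha}\le 2\eps_2$, so \eqref{it:Calphaclose} follows for small $\eps_1,\eps_2$.

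It remains to check the quantitative properties.
\begin{itemize}
\item The sup bound \eqref{max} holds because $|\nabla w|\le\max\{\max_j|\tilde X_j|,|B|,|B'|\}$, which equals the maximum over atoms of $\nu$.
\item The error set of $w$ is contained in the error set of $\tilde w$ together with the union of the wiggle error sets inside the $\Omega_k$. On the first part $\nabla w=\nabla\tilde w\in U$ by induction. On the second part $\nabla w\in U$ by \eqref{eq:dustinU}, since $\tilde X_{j_0}\in U$. The measure is at most $\eps_1|\Omega|+\eps_2(1+\eps_1)\tilde\lambda_{j_0}|\Omega|<\eps|\Omega|$.
\item For \eqref{epsj}, untouched atoms keep their bounds with $\eps_1$. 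For $B$ we get
\[
(1-\eps_2)(1-\eps_1)\tilde\lambda_{j_0}\lambda'|\Omega|\le|\{\nabla w=B\}|\le(1+\eps_2)(1+\eps_1)\tilde\lambda_{j_0}\lambda'|\Omega|,
\]
and similarly for $B'$.
\end{itemize}

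The main subtlety is bookkeeping of coinciding atoms. $B$ or $B'$ may coincide with some other $\tilde X_j$, or with values taken on the previous error set. Since the atoms of $\nu$ are distinct, we merge coinciding atoms, and the measures add, so the two-sided $(1\pm\eps)$ bounds are preserved. Matrices in the old error set that happen to equal an atom would only increase the count, and this increase is bounded by $\eps_1|\Omega|$. To keep the lower and upper bounds clean, we choose $\eps_1\ll\eps\min_j\lambda_j$ and define the error set of $w$ as in \eqref{it:afterproof}. Choosing $\eps_1,\eps_2$ small depending on $\eps$ and $\min_j\lambda_j$ closes the induction.
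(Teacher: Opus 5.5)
Your scheme is exactly the iteration the paper has in mind; the paper itself gives nothing beyond ``iterating \Cref{la:basicwiggle}''. You induct on the number of elementary splittings, apply \Cref{la:basicwiggle} on the regular domains where $\nabla\tilde w$ equals the atom being split, glue, and use that the split atom lies in $U$ to keep the new dust in $U$. Your bookkeeping is sound: the sup bound, the error set, coinciding atoms, old dust landing on new atoms, and the $C^\alpha$ estimate.

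There is, however, one case you do not cover. The paper's definition of elementary splitting only requires $\nu=\lambda\delta_X+(1-\lambda)\tilde\nu$, and nothing prevents $\tilde\nu$ from charging $X$. So one may split only the part $\lambda\delta_X$ of an atom of weight $\nu(\{X\})>\lambda$, and $X$ survives as an atom. Already $\lambda(\lambda'\delta_B+(1-\lambda')\delta_{B'})+(1-\lambda)\delta_{X_0}$ with $\lambda<1$ is obtained from $\delta_{X_0}$ by one elementary splitting and has three atoms. Hence your claim that the case $n=1$ ``is exactly \Cref{la:basicwiggle}'' fails. Likewise, in the inductive step the last splitting need not ``replace one atom''. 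Wiggling on all of $\{\nabla\tilde w=\tilde X_{j_0}\}$ then puts mass about $\tilde\lambda_{j_0}\lambda'|\Omega|$ at $B$ instead of $\lambda\lambda'|\Omega|$. It also leaves no mass at $\tilde X_{j_0}$, although $\tilde X_{j_0}$ is still an atom of $\nu$ with weight $\tilde\lambda_{j_0}-\lambda$, so the lower bound in item (3) fails for it.

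The repair is routine but is a genuinely extra step:
\begin{itemize}
\item Subdivide each $\Omega_k$ into countably many disjoint, arbitrarily small rescaled cubes (up to a null set), using the rescaling--covering argument from the proof of \Cref{la:exactPART}.
\item Select a subfamily whose total measure is $\frac{\lambda}{\tilde\lambda_{j_0}}|\Omega_k|$ up to a factor $1\pm\eps_2$.
\item Apply \Cref{la:basicwiggle} only on the selected cubes, and keep $w=\tilde w$ on the others. There $w$ is affine with gradient $\tilde X_{j_0}$, and these cubes are again regular domains, so the piecewise affine structure is preserved.
\end{itemize}
Choose $\eps_1,\eps_2$ small depending on $\nu$ and merge coinciding atoms as you already do. Then $B$, $B'$ and $\tilde X_{j_0}$ receive measures within factors $1\pm\eps$ of $\lambda\lambda'|\Omega|$, $\lambda(1-\lambda')|\Omega|$ and $(\tilde\lambda_{j_0}-\lambda)|\Omega|$ respectively. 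The new dust still lies near $\tilde X_{j_0}\in U$, and the rest of your estimates go through verbatim. With this case added, your argument proves the lemma as stated.
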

% Observe that from \eqref{epsj} it follows that 
% \begin{equation}\label{afterproof}
%     |\{x\in\Omega\colon\nabla u(x)\notin\supp\nu\}\leq\eps|\Omega|
% \end{equation}
% since $\sum_{j=1}^J\lambda_j=1$.

% \begin{proof}
% \ikasia{This proof should be written with more details}

% Since $\nu$ is a laminate of finite order, this follows from an iterative usage of \Cref{la:basicwiggle} and from the definition of a piecewise affine mapping (i.e., we can do elementary splittings and still work in regular domains).
% \end{proof}
Our next goal is the following version of \cite[Proposition 4.4]{KMSX24}. Recall the definition of $SL(K,U)$ from \Cref{def:SLKU}.
\begin{proposition}\label{pr:onestaircase}
Let $K \subset \R^{d\times m}$ be any set and $U \subset \R^{d\times m}$ be open.
%such that $K\cap U = \emptyset$. 
Let $\alpha \in (0,1)$, $q \in (1,\infty)$ be arbitrary.

Assume that $\nu^\infty \in SL(K,U)$, i.e., is a staircase laminate supported on a set $K$ with construction steps in $U$, i.e., the \emph{soaring sequence} $X_n \in U$ and barycenter $X_0\in U$.

Assume moreover that for some $p > 1$ we have for some $M > 0$ the upper bound estimate
\begin{equation}\label{asslam}
 \nu^\infty (\{X\colon  |X|>t\}) \leq M\, t^{-p} \quad \forall t >0.
\end{equation}
Then for each $b\in\R^d$, $\eps > 0$ and each regular domain $\Omega\subset\R^m$ there exists a piecewise affine map $w\colon\Omega\to\R^d$, $w\in W^{1,1}(\Omega,\R^d) \cap C^{\alpha}(\bar{\Omega},\R^d)$ such that
\begin{enumerate}
\item \label{boundary} $w(x)=X_0 x+b$ for $x\in\partial \Omega$;
\item \label{error:int} there exists a set $\Omega_{err} {\supset} \{x \in \Omega\colon \nabla w(x) \not \in K\}$ for which we have
 \[
  \int_{\Omega_{err}} (1+|\nabla w|)^q \leq \eps |\Omega|;
 \]
 \item \label{borelerr} for each Borel set $E \subset \R^{d\times m}$
\begin{equation}\label{eq:upperlowernuinfty}
 (1-\eps) \nu^\infty(E) \leq \frac{|\{x \in \Omega \setminus \Omega_{err}\colon \nabla w(x) \in E\}|}{|\Omega|} \leq (1+\eps) \nu^\infty(E);
\end{equation}
%\iarmin{I think \cite[(4.25c)]{KMSX24} is false, $\frac{|\{x \in \Omega: \nabla u(x) \in E\}|}{|\Omega|}$ can be nonzero if $E \cap \supp \nu^N = \emptyset$}\Igor{I agree with this.}
\item \label{error} the dust is in $U$, i.e., $\nabla w \in U$ a.e. in $\Omega_{err}$.
\end{enumerate}

\end{proposition}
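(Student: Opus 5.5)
We follow the proof of \cite[Proposition 4.4]{KMSX24}, with \Cref{la:wiggleiter} as the basic building block; since every construction step is in $U$, the dust at each stage stays in $U$. The plan is to realise the truncated laminates $\nu^N$ one step at a time. We construct piecewise affine maps $w_0 = X_0x+b, w_1, w_2,\dots$ such that $w_n$ ``realises'' $\nu^n$ up to small errors. Concretely, at stage $n$ we write $\Omega$ as the union of four pieces: the set $G_n$ where $\nabla w_n\in\supp\mu_1\cup\dots\cup\supp\mu_n\subset K$ (the good part), the set $S_n$ where $\nabla w_n=X_n$ (the step that is still to be refined), an accumulated error set $E_n$ on which $\nabla w_n\in U$, and a null set.

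To pass from $w_{n-1}$ to $w_n$, we modify $w_{n-1}$ only on $S_{n-1}$. Up to a null set, $S_{n-1}$ is a countable union of regular domains on each of which $w_{n-1}$ is affine with gradient $X_{n-1}\in U$. On each of these components we apply \Cref{la:wiggleiter} to the laminate $\omega_n$, which has barycenter $X_{n-1}$ and construction steps in $U$, with a parameter $\eps_n$ and Hölder closeness $\delta 2^{-n}$, and we glue the pieces as in \Cref{la:exactPART}. The new dust has measure at most $\eps_n|S_{n-1}|$, lies in $U$, and satisfies $|\nabla w_n|\le \max_{\supp\omega_n}|X|\eqqcolon R_n$ there. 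The proportions satisfy
\[
|\{\nabla w_n = Y\}\cap S_{n-1}| \in \bigl[(1\pm\eps_n)\,\omega_n(\{Y\})\,|S_{n-1}|\bigr],
\]
and $|S_{n-1}|$ is in turn $(1\pm\eps_1)\cdots(1\pm\eps_{n-1})\beta_{n-1}|\Omega|$.

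Choose $\eps_n$ so small that $\prod_n(1\pm\eps_n)\in[1-\eps,1+\eps]$ and
\[
\sum_n \eps_n (1+R_n)^q < \tfrac{\eps}{2}.
\]
Then the total dust contributes at most $\tfrac{\eps}{2}|\Omega|$ to the integral in \eqref{error:int}, and it lies in $U$. We stop at a large stage $N$ and define $w=w_N$. The error set is then
\[
\Omega_{err}\coloneqq E_N\cup S_N.
\]
On $S_N$ we have $\nabla w = X_N\in U$, and
\[
\int_{S_N}(1+|X_N|)^q \lesssim \beta_N(1+|X_N|)^q|\Omega|.
\]
This is where \eqref{asslam} is used. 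Since $\beta_N$ is the mass of $\nu^\infty$ that has not yet been resolved, and the later parts of the staircase live at level at least roughly $|X_N|$ (the measures $\mu_n$ for $n>N$ sit near the top of the staircase), we expect
\[
\beta_N \lesssim \nu^\infty(\{|X|>c|X_N|\}) \le M\,c^{-p}|X_N|^{-p}.
\]
Note, however, that when $q>p$ the product $\beta_N|X_N|^q$ need not go to zero.

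This is the main obstacle. The fix I would try first, following KMSX, is not to stop the iteration but to continue it to infinity. Then $S_N$ shrinks to a null set, the $w_n$ converge in $C^\alpha$ by the geometric Hölder bounds (exactly as in the proof of \Cref{th:thm4.1}), and the limit satisfies $\nabla w\in K$ off the dust. The error set $\Omega_{err}$ is then just the accumulated dust, which is controlled by the choice of $\eps_n$. Membership $w\in W^{1,1}$ follows from the uniform weak-$L^p$ bound obtained from \eqref{asslam} together with the proportion estimates, and the Fatou argument used in the proof of \Cref{th:thm4.1} gives this bound for the limit.

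Finally, \eqref{eq:upperlowernuinfty} follows because on $\Omega\setminus\Omega_{err}$ each atom of each $\mu_n$ is attained with measure within a factor $\prod_k(1\pm\eps_k)$ of
\[
\beta_{n-1}(1-\gamma_n)\,\mu_n(\{Y\})\,|\Omega|.
\]
Summing over the atoms contained in $E$ gives the two-sided bound with $\nu^\infty(E)$.
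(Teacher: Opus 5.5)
Your proposal is correct and follows essentially the paper's proof: refine only the pieces with gradient $X_{n-1}$ by applying \Cref{la:wiggleiter} to $\omega_n$, make the parameters summably small against $(1+R_n)^q$ (the paper's $c_N=\prod_{j\le N}(1+2^{-j}\eta)$ plays the role of your $\prod_k(1\pm\eps_k)$), run the iteration to infinity using the H\"older Cauchy bounds, and take $\Omega_{err}$ to be the accumulated dust. Two details should be tightened: first, the bound $\beta_n|X_n|^p\lesssim M$ behind your uniform weak-$L^p$ estimate on $S_n$ (which the paper also only delegates to \cite{KMSX24}) does not follow from the $\mu_k$, $k>n$, ``sitting near the top of the staircase'', since nothing in the hypotheses guarantees that, but from the fact that $\tau_n\coloneqq\beta_n^{-1}\sum_{k>n}\beta_{k-1}(1-\gamma_k)\mu_k\le\beta_n^{-1}\nu^\infty$ has barycenter $X_n$ (because $\nu^\infty$ and $\nu^n$ both have barycenter $X_0$), so that $|X_n|\le\int_0^\infty\min\{1,Mt^{-p}/\beta_n\}\,dt=\frac{p}{p-1}(M/\beta_n)^{1/p}$; second, $S_n$ should consist of the pieces newly created with gradient $X_n$ (or the dust should be kept away from all $X_j$, as in property (vi) of the paper's proof), since otherwise earlier dust could fall into the level set $\{\nabla w_n=X_n\}$ and spoil $|S_n|\approx\beta_n|\Omega|$.
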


\begin{proof}
We follow the argument of \cite[Proposition 4.4]{KMSX24}, adapting it to our present setting for completeness.

Recall from the \Cref{defstair} that $\nu^\infty\in SL(K,U)$ is a limit of a sequence of laminates of finite order $\nu^N$ with construction steps in $U$ (in particular, $X_n\in U$), barycenter $X_0$
\begin{equation}\label{eq:lfoinSL}
 \nu^N = \sum_{n=1}^N\beta_{n-1}(1-\gamma_n)\mu_n + \beta_N \delta_{X_N}, \quad \beta_n\coloneqq \prod_{k=1}^n\gamma_k,\  \beta_0 =1,
 \end{equation}
 and $\supp \mu_n \in K$.
 
Recall also that
 \begin{equation}\label{eq:omegaN}
\omega_N = (1-\gamma_N)\mu_N + \gamma_N\delta_{X_N} \quad \text{ has barycenter } X_{N-1}
\end{equation}
and is a laminate of finite order starting from $\delta_{X_{N-1}}$ with construction steps in $U$.

Let us write
\begin{equation}\label{nuN-easierform}
\begin{split}
\omega_N&=\sum_{i=0}^{J_N}\theta_{N,i}\delta_{Y_{N,i}},\text{ where } (1-\gamma_N)\mu_N=\sum_{i=1}^{J_N}\theta_{N,i}\delta_{Y_{N,i}} \text{ and } \theta_{N,0} = \gamma_N,\, Y_{N,0} = X_N;\\
|Y_N|&\coloneqq \max_{0\le i\le J_N}{|Y_{N,i}|} = \max\{|X_N|,\max_{1\le i\le J_N}|Y_{N,i}|\}.
\end{split}
\end{equation} 
%\iigor{More characters but it lacked some details I reckon.}

Let $q\in(1,\infty)$, $\eta>0$, and set 
\[
 c_N = \prod_{j=1}^N (1+2^{-j}\eta).
\]
Similarly to \cite[Proposition 4.4]{KMSX24} we will construct a sequence of piecewise affine Lipschitz maps $w_N\colon\Omega\to\R^d$ satisfying the following properties:

Set
\[\Omega^N_{ind} \coloneqq \{x \in \mathring\Omega\colon  \nabla w_N(x) = X_N\},\]
where $\mathring\Omega \coloneqq  \Omega \setminus \mathcal{N}$ is the decomposition defined in \Cref{piecewiseaf} corresponding to the piecewise affine map $w_N$.
\begin{enumerate}[label=(\roman*)]
 \item \label{boundaryI} $w_N = X_0 x+b$ on $\partial \Omega$;
 \item \label{errorN} $\nabla w_N \in U$ a.e. on some set $\Omega^N_{err} {\supset} \{x \in \Omega: \nabla w_N(x) \not \in \supp \nu_N\}$ with $\Omega_{err}^N \cap \Omega^N_{ind} = \emptyset$;
 \item \label{errorN:int} $\int _{\Omega_{err}^N} (1+|\nabla w_N|)^q \leq \eta |\Omega| (1-2^{-N})$;
 \item \label{outside:ind} $w_N = w_k$ in $\Omega \setminus \Omega^k_{ind}$ for all $1 \leq k \leq N-1$; $\Omega^N_{err} \supset \Omega^{N-1}_{err}$;
 \item \label{borelerrN} $c_N^{-1}|\Omega| \nu^N(E) \leq |\{x \in \Omega \setminus \Omega_{err}^{N}\colon  \nabla w_N(x) \in E\}| \leq c_N|\Omega| \nu^N(E)$ for any Borel set $E\subseteq\R^{d\times m}$;
 \item\label{dustdoesntseeXi} $\nabla w_N(x) \not \in \bigcup_{i =N}^\infty \{X_i\}$ for a.e. $x \in \Omega_{err}^N$;
 \item \label{Calphaconv} 
\[ \|w_N - w_{N-1}\|_{C^{\alpha}(\bar\Omega,\R^d)} < 2^{1-N}\eta;
\]
\item\label{OmegaIndest} 
\[
c_N^{-1} \beta_N |\Omega| \leq |\Omega_{ind}^N| \leq c_N \beta_N |\Omega|.
\]
\end{enumerate}

For $N=1$ 
% \begin{equation}\label{nu1}
% \nu^1=\alpha_1\mu_1+\beta_1\delta_{X_1} = \sum_{i=0}^k\theta_{i,1}\delta_{Y_{i,1}},\text{ where } \alpha_1\mu_1=\sum_{i=1}^k\theta_{i,1}\delta_{Y_{i,1}} \text{ and } \theta_{0,1} = \beta_1,\, Y_{0,1} = X_1.  
% \end{equation}
% and let $|Y|\coloneqq \max_{0\le i\le k}{|Y_{i,1}|} = \max\{|X_1|,\max_{1\le i\le k}|Y_{i,1}|\}$. 
we find a Lipschitz $w_1$ with required properties as a consequence of \Cref{la:wiggleiter} with $\nu^1=\omega_1$ starting from $\delta_{X_0}$ (with construction steps in $U$). Indeed, properties \ref{boundaryI}, \ref{errorN} are straightforward for $\Omega_{err}^1 \coloneqq  \{x \in \Omega: \nabla w_n(x) \not \in \supp \nu_1\}$. %Point \ref{errorN} follows also from \Cref{la:wiggleiter} since the set $\Omega_{err}^1=\{x\in\Omega\colon\nabla u_1(x)\notin\supp \nu^1\}$ for a laminate $\nu^1$ is constructed the same as the set $\Omega_{err}$ in \Cref{la:wiggleiter} for a laminate $\nu$ therein.
Using the notation in \eqref{nuN-easierform}, the property \ref{errorN:int} follows from items \eqref{max} and \eqref{it:afterproof} in \Cref{la:wiggleiter}
\[
\begin{split}
  \int_{\Omega_{err}^1} (1+|\nabla w_1|)^q &\leq |\Omega_{err}^1| \|(1+|\nabla w_1|)^q\|_{L^\infty} \leq \tilde \eps|\Omega|(1+|Y_1|)^q \leq \eta|\Omega|(1-2^{-1}),
  \end{split}
\]
where $\tilde \eps=\tilde \eps(q,\eta,|Y_1|)>0$ in \Cref{la:wiggleiter} is chosen to be small enough.
%, where $\bigcup_i Y_{i,1}$ and $X_1$ are the matrices on which $\nu^1$ is supported.
%, i.e., $\nu^1=\alpha_1\mu_1+\beta_1\delta_{X_1}$ for $\mu_1=\sum_i\theta_{i,1}\delta_{Y_{i,1}}$ 
%\Kasia{But from how it is written it's not clear that $\mu_1 = \delta_{Y_1}$.}\Igor{It was a mistake. Now it should be correct}
%\iigor{I added this argument since I do not like the fact that $\eps$ chosen in Lemma 5.2 needed to depend on $u_1$ (on the $L^\infty$ norm, precisely). For me it should be like that: first choose some $\eps$ and obtain $u_1$ with given properties from Lemma 5.2. Then observe that $\|u_1\|_{L^\infty}$ is controlled from above by the maximum of the norms of, fixed at the beginning!, matrices in support of the laminate. Then having this bound take a new $\eps$ and obtain (possibly new) $u_1$ with better properties in terms of the integral inequality.}

Property \ref{outside:ind} is empty for $N=1$.

Property \ref{dustdoesntseeXi} can be satisfied by choosing $\tilde{\eps}$ suitably small, then for a.e. $x \in \Omega$ the gradient $\nabla w_n(x)$ is either close to $X_0$ or $\nabla w_n \in \supp \omega_1$. In $\Omega^1_{err}$ we thus have $\nabla w_n(x)$ close to $X_0$ ($U$ is open). Since $\lim_{n \to \infty} |X_n| = \infty$ and using the fact that $X_n$ are pairwise distinct, we can ensure that $\nabla w_N(x) \neq X_i$ for any $i \neq 0$.

To verify property \ref{borelerrN} for $N=1$ we first note that if $E\cap \supp \nu^1 = E\cap\bigcup_{i=0}^{J_1}  \{Y_{1,i}\}=\emptyset$, then $\nu^1(E)=0$ and $|\{x \in \Omega \setminus \Omega_{err}^{1}\colon  \nabla w_1(x) \in E\}|=0$. Now, let $E\in \R^{d\times m}$ be any Borel set such that $E\cap\bigcup_{i=0}^{J_1}  \{Y_{1,i}\} \neq \emptyset $, then 
\[
E\cap\bigcup_{i=0}^{J_1}  \{Y_{1,i}\}=\bigcup_{i\in I_1} \{Y_{1,i}\} \quad \text{ for some } I_1 \subset \{0,1,\ldots,J_1\}.
\]
Then,
\[
|\{x\in\Omega\setminus\Omega_{err}^1\colon\nabla w_1(x)\in E\}|=\sum_{i\in I_1}|\{x\in\Omega\colon\nabla w_1(x)=Y_{1,i}\}|.
\]
Thus, from \Cref{la:wiggleiter} item \eqref{epsj} we obtain
\[
|\Omega|(1-\tilde{\eps})\Big(\sum_{i\in I_1} \theta_{1,i}\Big)\leq|\{x\in\Omega\setminus\Omega_{err}^1\colon\nabla w_1(x)\in E\}|\leq|\Omega|(1+\tilde{\eps})\Big(\sum_{i\in I_1} \theta_{1,i}\Big).
\]
Noting that $\nu^1(E)=\sum_{i\in I_1} \theta_{1,i}$ and choosing $\tilde{\eps}\leq \frac{\eta}{2+\eta}$, we obtain 
\[c_1^{-1}|\Omega|\nu^1(E)\leq|\{x\in\Omega\setminus\Omega_{err}^1\colon\nabla w_1(x)\in E\}|\leq c_1|\Omega|\nu^1(E).\]
This gives \ref{borelerrN} for $N=1$. Similarly we have \ref{OmegaIndest}, and this concludes the basis step of the inductive construction.

Given \underline{$w_N$ the next function $w_{N+1}$} is constructed by replacing $w_{N}$ on $\Omega^N_{ind}$ according to the next laminate of finite order decomposition. 
%That is to obtain $u_{N+1}$ we will apply \Cref{la:wiggleiter} with $\nu^{N+1}$ defined in \eqref{eq:lfoinSL} on connected components of $\Omega^N_{ind}$. 

By construction, there exists a decomposition of $\Omega_{ind}^N$ into a disjoint union of countably many regular domains: 
\[
\Omega_{ind}^N=\bigcup_{i=1}^{L_N} \Omega_i^N, \quad L_N \in \N\cup\{\infty\},  
\]
such that $w_N$ is affine on $\Omega_i^N$ with $\nabla w_N \equiv X_N$ in $\Omega_i^N$, i.e., $w_N(x)=X_Nx+b_i$ on each $\Omega_i^N$ for some $b_i \in \R$. On each $\Omega_i^N$ we apply \Cref{la:wiggleiter} with $\omega_{N+1}$ ($\tilde{\eps}_i$ will be chosen later) obtaining this way $w_i^{N+1}$. Set
\[
\Omega^{N+1}_{err,i} \coloneqq  \{x \in \Omega_i^N: \nabla w_i^{N+1} \not \in \supp \omega_N\} \supset \{x \in \Omega_i^N: \nabla w_i^{N+1} \not \in \supp \nu_N\}.
\]
As above for the case $N=1$, we find that for any Borel set $E \subset \R^{d \times m}$  
\begin{equation}\label{eq:upperlowernuinftyOmegaiN}
 \frac{c_N}{c_{N+1}} \omega_{N+1}(E) \leq \frac{|\{x \in \Omega_{i}^N \setminus \Omega_{err,i}^{N+1}\colon \nabla w_i(x) \in E\}|}{|\Omega_i^N|} \leq \frac{c_{N+1}}{c_N} \omega_{N+1}(E);
\end{equation}

We define
\begin{equation}\label{eq:definitionuN+1}
 w_{N+1} \coloneqq \left\{ \begin{array}{ll}
                            w_N & \text{ outside } \Omega^N_{ind}\\
                            w_i^{N+1} &\text{ in } \Omega_i^N
                           \end{array}
 \right.
\end{equation}
and we set 
\[
\Omega_{err}^{N+1} \coloneqq  \Omega^N_{err} \; \dot{\cup} \; \dot{\bigcup}_{i=1}^{L_N} \, \Omega_{err,i}^{N+1} \; \cup \mathcal{N}.
\]
This immediately gives \ref{boundaryI} and \ref{outside:ind}.

Note that the map $w_N$ is Lipschitz, with 
\[
|\nabla w_N(x)| \le \max\left\{\max_{1\le i\le N}\max_{1\le j\le J_i}|Y_{i,j}|,|X_N|\right\}\quad \text{ for a.e. }x\in\Omega. 
\]
Moreover, from \Cref{la:wiggleiter} (property \eqref{it:Calphaclose}), by using the gluing argument in \cite[Section 2.1]{KMSX24} and by choosing $\tilde{\eps}_i$ small enough, we can ensure
\[
 \|w_N - w_{N+1}\|_{C^{\alpha}(\bar\Omega,\R^d)} < 2^{-N}\eta.
\]
This gives \ref{Calphaconv}.

For property \ref{dustdoesntseeXi}, observe that on $\Omega^N_{err}$ we have $\nabla w_{N+1} = \nabla w_N$, and thus $\nabla w_{N+1}(x) \not \in \bigcup_{j =N}^\infty \{X_j\} $ if $x \in \Omega^N_{err}$. On each $\Omega^{N+1}_{err,i}$ the dust lies close to $X_{N}$ and thus (again, since $\lim_{n \to \infty} |X_n| = \infty$ and $X_n$ are pairwise distinct) we can ensure, by taking $\tilde{\eps}_i$ small enough, that $\nabla w_{N+1}(x) \notin \bigcup_{j=N+1}^\infty \{X_j\}$ for a.e. $x \in \Omega^{N+1}_{err}$.

Observe that $\Omega^N_{err} \cap \Omega^N_{ind} = \emptyset$ by assumption, and thus by the previous observation $\Omega^{N+1}_{err} \cap \Omega^{N+1}_{ind}=\emptyset$. Moreover, since $w_{N+1} = w_N$ outside of $\Omega^N_{ind}$ we have from the inductive assumption that
\[
 \nabla w_{N}(x) \in U \quad \text{a.e. in } \Omega^N_{err}
\]
and from \Cref{la:wiggleiter} we know that 
\[
 \nabla w_i^{N+1}(x) \in U \quad \text{a.e. in } \Omega^N_{err,i}.
\]
We obtain property \ref{errorN}.

% and $X_{N+1} \not \ni K \supset \supp \omega_N$. choosing $\tilde{\eps}_i$ suitably close, we can ensure that $\nabla w_i^{N+1}$ is either close to $X_n$ or $\nabla w_{i}^{N+1} \in \supp \omega_N \subset K \not \ni X_{n+1}$. Since $X_{n+1} \neq X_n$ and $\supp \omega_N$ are finitely many points, we can thus ensure that $\nabla w(x)\neq X_{N+1})$ for a.e. $x \in \Omega_{i}^N$, which ensures $\Omega_i^{N+1} \cap \

Moreover,
\[
\begin{split}
 \int_{\Omega^{N+1}_{err}} (1+|\nabla w_{N+1}|)^q 
 &= \int_{\Omega^N_{err}} (1+|\nabla w_N|)^q +\int_{\bigcup_{i=1}^{L_N}\Omega_{err,i}^N} (1+|\nabla w_{N+1}|)^q\\
&\le \eta |\Omega| (1-2^{-N}) + \sum_{i=1}^{L_N}\tilde{\eps}_i |\Omega_i^N| (1+|Y_{N+1}|)^q\\
&\leq\eta |\Omega| (1-2^{-N}) +\eta |\Omega| 2^{-N-1}
=\eta |\Omega| (1-2^{-N-1}),
 \end{split}
\]
where we used the inductive assumption in the second inequality and choose $\tilde{\eps}_i$ small enough, e.g., $\tilde{\eps}_i \le \eta2^{-N-1}(1+|Y_{N+1}|)^{-q}$ in \Cref{la:wiggleiter} on each $\Omega_i^N$. This gives \ref{errorN:int}.

We also obtain \ref{OmegaIndest} by choosing the $\tilde{\eps}_i$ small enough, and observing that $\nabla w_{N+1}(x) = X_{N+1}$ implies that $\nabla w_{N}(x) = X_{N}$, so we can apply the induction hypothesis.

In particular this implies 
\[
\begin{split}
 \frac{|\{x\in \Omega \setminus \Omega^{N+1}_{err}\colon \nabla w_{N+1}(x) \in X_{N+1}\}|}{|\{x\in \Omega_{ind}^N \colon \nabla w_{N}(x) \in X_{N} \cap \{X_{N+1}\} \}} 
\in \left( \frac{c_N}{c_{N+1}} \gamma_{N+1}, \frac{c_{N+1}}{c_N} \gamma_{N+1} \right)  
 \end{split}
\]
which gives, by induction,
\[
\begin{split}
 \frac{|\{x\in \Omega \setminus \Omega^{N+1}_{err}\colon \nabla w_{N+1}(x) \in X_{N+1}\}|}{|\Omega|} 
&\in \left( \frac{c_N}{c_{N+1}}\beta_{N+1},\frac{c_{N+1}}{c_N}\beta_{N+1} \right) \\
&= \left( \frac{c_N}{c_{N+1}}\gamma^{N+1}(\{X_{N+1}\}),\frac{c_{N+1}}{c_N}\gamma^{N+1}(\{X_{N+1}\}) \right).
 \end{split}
\]
This implies that for any Borel set $E \subset \R^{d \times m}$
\[ C_{N+1}^{-1} \nu^{N+1}(E \cap \{X_{N+1}\}) \leq \frac{|\{x \in \Omega \setminus \Omega_{err}^{N+1}\colon \nabla w(x) \in E \cap \{X_{N+1}\}\}|}{|\Omega|} \leq C_{N+1} \nu^{N+1}(E\cap \{X_{N+1}\}).
\]
Now observe that 
\[
\Omega \setminus \Omega^{N+1}_{err} =  \Omega \setminus \brac{\Omega_{ind}^N\cup \Omega^{N}_{err}} \dot{\cup} \bigcup_{i=1}^{L_N} \Omega_i^N \setminus \Omega_{err,i}^{N+1}
\]
and for two null sets $\mathcal{N}_1,\mathcal{N}_2$,
\[
 \Omega \setminus \brac{\Omega_{ind}^N\cup \Omega^{N}_{err}} \cup \mathcal{N}_1= \{x \in \Omega\setminus \Omega^N_{err}: \nabla w_{N+1}(x) = \nabla w_{N}(x) \neq X_{N}\} \cup \mathcal{N}_2
\]
Thus
\begin{equation}\label{eq:eeeee}
\begin{split}
 &|\{x\in \Omega \setminus \Omega^{N+1}_{err}\colon \nabla w_{N+1}(x) \in E\setminus \{X_{N+1}\}\}| \\
 % &= |\{x\in \Omega \setminus \brac{\Omega^N_{ind}\cup \Omega^{N+1}_{err}}\colon \nabla w_{N+1}(x) \in E\}| + |\{x\in \Omega^N_{ind}\setminus \Omega^{N+1}_{err}\colon \nabla w_{N+1}(x) \in E\}|\\
 &=|\{x\in \Omega\setminus \Omega^N_{err} : \nabla w_N(x)\in E\setminus\{X_N \cup X_{N+1}\}\}| + \sum_{i=1}^{L_N}|\{x\in \Omega^N_i\setminus \Omega^{N+1}_{err,i}\colon \nabla w_{N+1}(x) \in E \setminus \{X_{N+1}\} \}|.
 %&= |\{x\in \Omega\setminus \Omega^{N+1}_{err} \colon \nabla u_{N+1}(x) \in E\setminus \{X_N\}\}| + \sum_{i=1}^{L_N}|\{x\in \Omega_i^N\setminus \Omega^{N+1}_{err}\colon \nabla u_{N+1}(x) \in E\}|.
\end{split}
\end{equation}
For the second term above we can apply \eqref{eq:upperlowernuinftyOmegaiN}, for the first term we apply the induction hypothesis, to arrive at 
\[
\begin{split}
&\frac{|\{x\in \Omega \setminus \Omega^{N+1}_{err}\colon \nabla w_{N+1}(x) \in E\setminus \{X_{N+1}\}\}|}{|\Omega|}\\
\leq& c_N \nu^N(E \setminus \{X_N,X_{N+1}\}) + \frac{c_{N+1}}{c_N} \frac{|\Omega^N_{ind}|}{|\Omega|}\omega_{N+1}(E \setminus \{X_{N+1}\})\\
\leq&c_{N+1} \brac{\nu^N(E \setminus \{X_N,X_{N+1}\}) + \beta_N \omega_{N+1}(E \setminus \{X_{N+1}\})}.
 \end{split}
\]
In view of \eqref{eq:lfoinSL}
\[
\beta_{N} \omega_{N+1}(E \setminus \{X_{N+1}\}) + \nu^N (E \setminus \{X_N,X_{N+1}\})= \nu^{N+1}(E \setminus \{X_{N+1}\}) 
\]
and we have shown
\[
\begin{split}
&\frac{|\{x\in \Omega \setminus \Omega^{N+1}_{err}\colon \nabla w_{N+1}(x) \in E\setminus \{X_{N+1}\}\}|}{|\Omega|}\\
\leq& c_N \nu^N(E \setminus \{X_N,X_{N+1}\}) + \frac{c_{N+1}}{c_N} \frac{|\Omega^N_{ind}|}{|\Omega|}\omega_{N+1}(E \setminus \{X_{N+1}\})\\
\leq&c_{N+1} \nu^{N+1}(E \setminus \{X_{N+1}\}).
 \end{split}
\]
The lower bound follows analogously. Combining \eqref{eq:eeeee} and \eqref{eq:upperlowernuinfty} yields \ref{borelerrN}.

Once $w_N$ is constructed, the remainder of the proof is the verbatim as in 
\cite[Proposition~4.4]{KMSX24}, \eqref{asslam} ensures weak convergence in $W^{1,p-\eps}$ for $\eps > 0$, and the Cauchy condition on the H\"older norm ensures that the sequence $(w_N)$
converges uniformly to a map 
\[
w \in W^{1,1}(\Omega,\R^d)\cap C^{\alpha}(\overline{\Omega},\R^d),
\]
and that $w$ satisfies the required properties.
\end{proof}

\begin{proof}[Proof of \Cref{th:thm4.3}]
Fix $\eps$, $q$, $\alpha$ and $\Omega$ and $X_0$ as in \Cref{reduc}, and take the corresponding staircase laminate $\nu^\infty_{X_0}$ from the assumptions of \Cref{th:thm4.3}. W.l.o.g. $q > p$. 

Applying \Cref{pr:onestaircase}, we find $w$ that satisfies \eqref{eq2.3}, \eqref{eq2.1}, we only need to establish \eqref{eq2.2lower}.

Applying \eqref{eq:upperlowernuinfty} to $E \coloneqq  B(0,t)^c$ combined with \eqref{eq1.1} gives
\[
|\{x\in\Omega \setminus \Omega_{err}\colon |\nabla w(x)|>t\}|\leq M^p(1+|X_0|^p)|\Omega|t^{-p} \; \text{ for all }t>0.
\]
Moreover, by Chebyshev, since $q > p$,
\[
|\{x\in\Omega_{err}\colon |\nabla w(x)|>t\}| \aleq t^{-p} \int_{\Omega} |\nabla w|^p \leq \leq t^{-p} \int_{\Omega} (1+|\nabla w|)^q \leq \eps |\Omega|,
\]
for an $\eps>0$ of our choice, which readily leads  to \eqref{eq2.2lower}.

 If we have in addition \eqref{eq:1.1lower}, by \eqref{eq:upperlowernuinfty} in \Cref{pr:onestaircase}, we obtain \eqref{eq:exactreducibility} as follows
 \[
 \begin{split}
     |\{x\in\Omega : |\nabla w(x)|>t\}| &\ge |\{x\in\Omega\setminus\Omega_{err} : |\nabla w(x)|>t\}| \\
     &\ageq (1-\tilde{\varepsilon})m^p|\Omega|t^{-p} \\
     &\ge \frac{1}{2}m^p|\Omega|t^{-p}.
 \end{split}
 \]
We can conclude.
\end{proof}

\section{Application to the Astala-Faraco-Sz\'{e}kelyhidi result: Proof of Theorem~\ref{th:AFS}}\label{s:AFS}
%\iarmin{Rewrite this section: Make use of the Propositions of previous section}
%\iigor{I can do it but later.}
Throughout the remainder of the paper we fix $\la>1$. We introduce the \emph{good} set

\begin{equation}\label{def:KLambda}
 \mathscr{K}_{\Lambda} \coloneqq \left \{ \begin{pmatrix} a & b\\c & d \end{pmatrix} \in \R^{2 \times 2}\colon A\begin{pmatrix} a \\
                          b                                                               \end{pmatrix} + \begin{pmatrix} -d \\
                          c                                                               \end{pmatrix} = 0,  \text{ for some } A=\begin{pmatrix}
                              \lambda & 0\\
                              0 & \alpha
                              \end{pmatrix},
                          \lambda\in\{\la,\la^{-1}\}, \alpha\in[\la^{-1},\la] 
\right \}.
\end{equation}

As usual in these sort of arguments the reason we are interested in this set is the observation that if  $w \coloneqq  (u,v)$ satisfies for some $x \in \B^2$
\[
\nabla w(x) \equiv \begin{pmatrix} \partial_1 u(x) & \partial_2 u\\ \partial_1 v & \partial_2 v\end{pmatrix} \in \mathscr{K}_{\Lambda}
\]
then there exists a diagonal $A(x)\coloneqq \begin{pmatrix}
                              \lambda(x) & 0\\
                              0 & \alpha(x)
                              \end{pmatrix}$ such that $\frac{1}{\Lambda} I \leq A \leq \Lambda I$, and 
\begin{equation}\label{kappaimpliespde}
A(x)\nabla u(x) + \nabla^\perp v(x) = 0.
\end{equation}
If this happens for a.e. $x \in \B^2$ we can take the divergence and arrive at the PDE
\[
\div(A \nabla u) = 0 \quad \text{in $\B^2$}.
\]
So, our goal is to find $w: \B^2 \to \R^2$ with a prescribed boundary data at $\partial \B^2$ so that $\nabla w \in \mathscr{K}_{\Lambda}$ a.e. in $\B^2$ ---  and show that we can do so with certain properties on the growth of $w$.                            
The main new point is that if we choose correctly the boundary condition $u(x) = l$ on $\partial \B^2$, then we can ensure that the standard convex integration scheme lies entirely in the following set 
\begin{equation}\label{def:reasonableset}
\mathscr{U}_{\bar{a},\delta,\gamma} \coloneqq \left \{ \begin{pmatrix} a & b\\ -c & d\end{pmatrix} \in \R^{2 \times 2}\colon \quad a,d > \bar{a},\  \abs{1-\frac{a}{d}} < \delta, \text{ and } |1-b|,|1-c|< \gamma\right \}
\end{equation}  
for some $\bar{a},\delta,\gamma>0$. It is clear that $\mathscr{U}_{\bar{a},\delta,\gamma}$ is an open subset of $\R^{2 \times 2}$.

For technical convenience we also define the (not-open) set
\begin{equation}\label{def:reasonableset0}
\mathscr{U}_{\bar{a},0,\gamma} \coloneqq \left \{ \begin{pmatrix} a & b\\ -c & a\end{pmatrix} \in \R^{2 \times 2}\colon \quad a > \bar{a}, \quad |1-b|,|1-c|< \gamma\right \}
\end{equation}

and for $\Gamma > 1$
\[
\mathscr{K}_{\Lambda,\Gamma} \coloneqq  \left \{ \begin{pmatrix} a & b\\c & d \end{pmatrix} \in \mathscr{K}_{\Lambda}\colon \Gamma^{-1}\leq \frac{|a|+|b|}{|c| + |d|} \leq \Gamma
\right \}.
\]

We briefly discuss our overall strategy. Firstly, we show that whenever $\delta$ and $\gamma$ are not too large, and $\bar{a}$ is not too small, and $X \in\mathscr{U}_{\bar{a},\delta,\gamma}$ 
then we find a staircase laminate $\nu^\infty \in SL(\mathcal{K}_\Lambda,\mathscr{U}_{\bar{a},\delta,\gamma})$, i.e., supported on $\mathscr{K}_\Lambda$ with construction steps in $\mathscr{U}_{\bar{a},\delta,\gamma}$ with barycenter $X$ (see \Cref{la:laminate}) --- we fix those choices of $\delta,\gamma,\bar{a}$ throughout the rest of the argument.

Then, when we run the high–oscillatory perturbation (i.e., wiggle) lemma, \Cref{pr:onestaircase}, starting from $X \in \mathscr{U}_{\bar{a},\delta,\gamma}$ 
we can ensure that the resulting $\nabla w$ belongs to the  error set $\Omega_{err}$ (the set where we do not satisfy the equation $\nabla u \not \in \mathscr{K}_{\Lambda}$) and stays in $\mathscr{U}_{\bar{a},\delta,\gamma}$ 
a.e. (in other words the ``dust'' is in $\mathscr{U}_{\bar{a},\delta,\gamma}$). Thanks to this, we can restart the process. This is the (only) new idea. The advantage of only considering laminates in $\mathscr{U}_{\bar{a},\delta,\gamma}$ is that we can obtain better integrability estimates than if we were to work with $\R^{2\times 2}$.

Secondly, since we are in $\mathscr{U}_{\bar{a},\delta,\gamma}$ 
(not in a generic subset of $\R^{2 \times2}$) we can restart the oscillatory perturbation procedure from $\mathscr{U}_{\bar{a},\delta,\gamma}$, 
meaning we can obtain a better convergence rate. 
%This allows us to omit the original arguments used in \cite{KMSX24} concerning a \emph{reduction in weak $L^p$} method (see \Cref{reduc}). 

Here is the main result of this section
\begin{theorem}\label{th:UbardeltagammaReducible}
For any $\Lambda > 1$ there exists $\delta,\gamma,\bar{a},\Gamma > 0$ such that $\mathcal{U}_{\bar{a},\delta,\gamma}$ can be $\mathcal{U}_{\bar{a},\delta,\gamma}$-reduced to $\mathscr{K}_{\Lambda,\Gamma}$ exactly in weak $L^p$ for $p = \frac{2\Lambda}{\Lambda-1}$.
\end{theorem}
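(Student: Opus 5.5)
The plan is to apply \Cref{th:thm4.3} with $K=\mathscr{K}_{\Lambda,\Gamma}$, $U=\mathscr{U}_{\bar{a},\delta,\gamma}$ and $p=\frac{2\Lambda}{\Lambda-1}$. Everything then reduces to the following claim. For every $X_0\in\mathscr{U}_{\bar{a},\delta,\gamma}$ we must construct a staircase laminate $\nu^\infty_{X_0}\in SL(\mathscr{K}_{\Lambda,\Gamma},\mathscr{U}_{\bar{a},\delta,\gamma})$ with barycenter $X_0$. It must satisfy the upper tail bound \eqref{eq1.1} and the lower tail bound \eqref{eq:1.1lower}, with constants $M,m$ independent of $X_0$. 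Exact $U$-reducibility then follows directly from \Cref{th:thm4.3}.

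\textbf{First reduction.} Starting from a general $X_0=\begin{pmatrix} a & b\\ -c & d\end{pmatrix}\in\mathscr{U}_{\bar{a},\delta,\gamma}$, I first perform a bounded number of rank-one splittings. These keep all construction steps inside $\mathscr{U}_{\bar{a},\delta,\gamma}$ and move the mass to a point of the ``diagonal'' set $\mathscr{U}_{\bar{a},0,\gamma}$, i.e.\ to a matrix $\begin{pmatrix} a' & b'\\ -c' & a'\end{pmatrix}$; the leftover mass is sent to points of $\mathscr{K}_{\Lambda,\Gamma}$.

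Concretely, I split along the directions $e_1\otimes e_1$ or $e_2\otimes e_2$. These change only $a$ or only $d$, respectively. The endpoints are chosen so that one endpoint lies in $\mathscr{K}_\Lambda$: this uses the relation $-d=\lambda a$, $c=-\alpha b$ with $\alpha\in[\Lambda^{-1},\Lambda]$, and since $b,c\approx1$ one may take $\alpha=c/b$. The other endpoint has $a=d$. Because $|1-a/d|<\delta$ is small, the masses sent to $K$ are small and the intermediate matrices remain in $U$. Choosing $\bar a$ large compared with $\gamma$ ensures that $(|a|+|b|)/(|c|+|d|)$ is comparable to $1$, so these points land in $\mathscr{K}_{\Lambda,\Gamma}$.

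\textbf{The staircase.} From a diagonal point $X_{n-1}=\begin{pmatrix} s & b\\ -c & s\end{pmatrix}$ I follow the AFS staircase. First split along $e_1\otimes e_1$ to an endpoint in $\mathscr{K}_\Lambda$ with $\lambda=\Lambda$, i.e.\ $a=-s/\Lambda$. Then split the resulting off-diagonal point along $e_2\otimes e_2$ to an endpoint in $\mathscr{K}_\Lambda$ with $\lambda=\Lambda^{-1}$. This lands at $X_n$ on the diagonal with $s_n=\kappa s_{n-1}$ for a fixed $\kappa>1$.

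A direct computation of the barycenter weights should give
\[
\gamma_n=\frac{s_{n-1}}{s_n}\cdot(\text{correction})=\Big(\frac{s_{n-1}}{s_n}\Big)^{p}(1+O(s_n^{-1})),\qquad p=\frac{2\Lambda}{\Lambda-1}.
\]
Hence $\beta_n\approx s_n^{-p}$ up to bounded factors. The $O(s^{-1})$ corrections come from the entries $b,c$ and are summable because $s_n$ grows geometrically, so the product of the corrections converges.

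Since $\mu_n$ is supported where $|X|\approx s_n$ and carries mass $\approx\beta_{n-1}$, we get $\nu^\infty(\{|X|>t\})\approx t^{-p}$ from both sides for $t>1$. For $t\le\bar a$ the lower bound \eqref{eq:1.1lower} holds because the whole measure lies at norm $\ge\bar a$; I would choose $\bar a\geq1$. The factor $(1+|X_0|^p)$ absorbs the dependence of the starting scale $s_0\approx|X_0|$.

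\textbf{Construction steps in $U$.} All intermediate splitting points must lie in $\mathscr{U}_{\bar{a},\delta,\gamma}$. The diagonal points $X_n$ have $a=d$ and $b,c$ unchanged, so they lie in $\mathscr{U}_{\bar{a},0,\gamma}\subset U$. However, the intermediate point after the first splitting has $a\ne d$ with $a/d$ far from $1$, which is outside $U$.

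\textbf{Main obstacle.} The delicate step is therefore reorganizing each staircase step so that every point from which a splitting originates satisfies $|1-a/d|<\delta$. I would try to replace the single two-step move $X_{n-1}\to X_n$ by many small alternating moves. Each move changes $a$ or $d$ by a factor $1+O(\delta)$, with one endpoint in $\mathscr{K}_\Lambda$ and the other staying within $\delta$ of the diagonal. This is a ``small-step'' version of the staircase.

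Splitting a multiplicative step of size $\kappa$ into $k$ substeps $\kappa^{1/k}$ and composing them, the total weight should multiply to the same exponent $p$. This works because the exponent in the AFS computation is determined by the derivative of the weight in the scaling parameter, which is additive. It must be verified that the error terms are $O(\delta^2)$ per substep, so that they sum to $o(1)$ over the $O(1/\delta)$ substeps per unit of $\log s$. Otherwise the exponent would drift from $\frac{2\Lambda}{\Lambda-1}$.

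Alongside this, $\Gamma$ must be fixed so that all endpoints in $\mathscr{K}_\Lambda$ land in $\mathscr{K}_{\Lambda,\Gamma}$; this holds since $|a|\approx|d|\approx s$ at every endpoint. Finally, $\gamma$ must be chosen small and $\bar a$ large so that the $b,c$ perturbations never leave $|1-b|,|1-c|<\gamma$; note that $b,c$ are in fact unchanged by diagonal-direction splittings.
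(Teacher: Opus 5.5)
There is a genuine gap: your staircase cannot give exactness, i.e.\ the lower bound \eqref{eq:1.1lower}, which is the real content of the statement. Your overall route is the paper's: Theorem~\ref{th:thm4.3} applied to a staircase of $e_1\otimes e_1$ and $e_2\otimes e_2$ splittings, after a preliminary split to equal diagonal entries. The problem is the geometric growth $s_n=\kappa s_{n-1}$. The splittings never touch the off-diagonal entries, so these do not enter the weights, and one gets exactly
\[
\gamma_n=\frac{\Lambda-\kappa}{\kappa(\Lambda\kappa-1)}\qquad\text{for all } n,
\]
with no $O(s_n^{-1})$ correction. Write $f(\kappa)=-\ln\bigl(\kappa^{p}\gamma_n\bigr)$. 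Then $f(1)=0$ and
\[
f'(\kappa)=\frac{\Lambda(\Lambda+1)(\kappa-1)^2}{(\Lambda-1)\,\kappa\,(\Lambda\kappa-1)(\Lambda-\kappa)}>0 \quad\text{on } (1,\Lambda),
\]
so $\kappa^p\gamma_n<1$ strictly. Hence $\beta_n|X_n|^p\asymp|X_0|^p e^{-nf(\kappa)}\to0$. The tail of $\nu^\infty$ is therefore $\lesssim (|X_0|/t)^{q}$ with $q=p+f(\kappa)/\ln\kappa>p$, and \eqref{eq:1.1lower} fails for large $t$.

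Your small-step repair has the same defect. For any fixed ratio $\kappa=1+\eta$ the loss per step is the fixed amount $f(1+\eta)\approx\frac{\Lambda(\Lambda+1)}{3(\Lambda-1)^3}\eta^3$. Reaching scale $t$ takes about $\eta^{-1}\ln(t/|X_0|)$ steps, so the exponent becomes $p+O(\eta^2)$ with a strictly positive correction. Controlling the error per unit of $\log s$ is the wrong criterion. The total loss over all infinitely many steps must be finite, and that forces the relative step size to tend to zero.

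The missing idea is the paper's additive staircase $X_n=\begin{pmatrix} x+n&\sigma_1\\-\sigma_2&x+n\end{pmatrix}$. From $X_{n-1}$ you first raise the $(1,1)$-entry from $x+n-1$ to $x+n$, with other endpoint $G_{n,1}$ having $(1,1)$-entry $(x+n-1)/\Lambda$. Then you raise the $(2,2)$-entry, with other endpoint $G_{n,2}$ having $(2,2)$-entry $(x+n)/\Lambda$. This removes both of your difficulties at once:
\begin{itemize}
\item The intermediate point $\begin{pmatrix} x+n&\sigma_1\\-\sigma_2&x+n-1\end{pmatrix}$ has $|1-a/d|=1/(x+n-1)<1/\bar a<\delta$. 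It therefore lies in $\mathscr{U}_{\bar a,\delta,\gamma}$ automatically, which is exactly why the paper imposes \eqref{eq:lam:cond4}.
\item The relative step is now $\kappa_n-1=1/(x+n-1)$, so
\[
\gamma_n=\frac{x+n-1}{x+n}\cdot\frac{x+n-\frac{\Lambda}{\Lambda-1}}{x+n+\frac{1}{\Lambda-1}},
\]
and the losses $f(\kappa_n)\lesssim(x+n-1)^{-3}$ are summable.
\end{itemize}
Consequently $\beta_n\asymp\bigl(x/(x+n)\bigr)^{p}$ uniformly in $x>\bar a$. This gives both \eqref{beta:cond} and \eqref{beta:again}, hence both tail bounds via \cite[Lemma 3.3]{KMSX24}.

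You should also fix a sign. Membership in $\mathscr{K}_\Lambda$ means $d=\lambda a$, not $-d=\lambda a$. So the first endpoint has $(1,1)$-entry $s/\Lambda$; with $-s/\Lambda$ it is not in $\mathscr{K}_\Lambda$ at all.
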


\Cref{th:UbardeltagammaReducible} is a consequence of the following \Cref{la:laminate} and the laminate criterion \Cref{th:thm4.3} proven in the previous section. It is clear that $\delta, \gamma,\bar{a}$ as below exist and can easily be computed.

% later in the convex integration scheme and is analogous to the construction in \cite[Proposition~3.1]{KMSX24} (with the additional requirement that all elementary splittings remain inside $\mathscr{U}_{\bar a,\delta,\gamma}$).
\begin{proposition}\label{la:laminate}
Let $\Lambda > 1$, $\Gamma > 2\Lambda+10$ and $\delta,\gamma \in (0,\frac{1}{2})$, $\bar{a} > 4$ be such that the following holds
\begin{equation}\label{eq:lam:cond1}
 1-\delta > \frac{1}{\Lambda},
\end{equation}
\begin{equation}\label{eq:lam:cond2}
\gamma < \frac{\Lambda-1}{\Lambda+1},
\end{equation}
\begin{equation}\label{eq:lam:cond3} 
\bar{a} > \frac{\Lambda}{\Lambda-1}+10, 
\end{equation}
\begin{equation}\label{eq:lam:cond4}
\frac{1}{\bar{a}} < \delta.
\end{equation}

Set $p=\frac{2\la}{\la-1}$. There exists a constant $M=M(\Lambda)>0$, and $m = m(\bar{a},\Lambda)$ with the following properties.

For any $X_0 \in \mathscr{U}_{\bar{a},\delta,\gamma}$ there exists $\nu^\infty\in SL(\mathscr{K}_{\Lambda,\Gamma},\mathscr{U}_{\bar{a},\delta,\gamma})$ 
a staircase laminate with barycenter $X_0$, cf. \Cref{def:SLKU}, satisfying all of the following
\begin{enumerate}
    \item \label{lam:prop4}  for all $t>0$
\begin{equation}\label{lam:upper}
\nu^\infty(\{X\colon  |X| > t\}) \leq M\, (1+|X_0|^p) t^{-p};
\end{equation}
    \item \label{lam:prop5} for all $t > 1$ 
    \begin{equation}\label{lam:lower}
 \nu^\infty(\{X\colon  |X| > t\}) \geq m\, t^{-p}.
\end{equation}

\end{enumerate}
\end{proposition}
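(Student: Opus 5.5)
We adapt the Astala--Faraco--Sz\'{e}kelyhidi staircase to the present framework. The point is to make every splitting originate in $\mathscr{U}_{\bar a,\delta,\gamma}$, so that one can restart from any $X_0$ in this set.

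\textbf{Step 1: moving onto the diagonal.} Write $X_0=\begin{pmatrix} a & b\\ -c & d\end{pmatrix}\in\mathscr{U}_{\bar a,\delta,\gamma}$. In a first rank-one splitting (or two) we replace $X_0$ by a matrix of the form $\begin{pmatrix} s & b\\ -c & s\end{pmatrix}\in\mathscr{U}_{\bar a,0,\gamma}$, plus masses in $\mathscr{K}_{\Lambda,\Gamma}$.

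The natural rank-one directions change only one column (or one row). For instance, if $a<d$, split along the first column: $a$ is moved to $d$ (staying in $\mathscr{U}$, since $|1-a/d|<\delta$ and $a>\bar a$), or to a value making the matrix lie in $\mathscr{K}_\Lambda$. Being in $\mathscr{K}_\Lambda$ means
\[
\lambda a - d=0,\qquad \alpha b + c'=0,
\]
where the lower-left entry is $c'=-c$. The second equation holds with $\alpha=c/b$; this is admissible because $|1-b|,|1-c|<\gamma$ and \eqref{eq:lam:cond2} give $c/b\in[\Lambda^{-1},\Lambda]$. The first equation requires $a=d/\Lambda$ or $a=\Lambda d$. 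Condition \eqref{eq:lam:cond1} places $a$ strictly between $d/\Lambda$ and $\Lambda d$, so both endpoints are available and all intermediate matrices lie in $\mathscr{U}$.

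The masses deposited in $\mathscr{K}_\Lambda$ have size comparable to $|X_0|$. Their ratio $(|a|+|b|)/(|c|+|d|)$ is bounded by $\Gamma$ because $\bar a>4$ and $\Gamma>2\Lambda+10$.

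\textbf{Step 2: the soaring sequence.} Starting from $X_{n-1}=\begin{pmatrix} s_{n-1} & b\\ -c & s_{n-1}\end{pmatrix}$, we perform two successive rank-one splittings, changing first the first column and then the second row. Each time the diagonal entry is sent either up, to a point still in $\mathscr{U}$, or to the $\mathscr{K}_\Lambda$ endpoint ($\lambda=\Lambda$ or $\Lambda^{-1}$ respectively). The result is
\[
X_n=\begin{pmatrix} s_n & b\\ -c & s_n\end{pmatrix},\qquad s_n=s_{n-1}+\text{const}\quad\text{or}\quad s_n=\kappa\, s_{n-1}.
\]
The intermediate matrix has entries $s_n,s_{n-1}$ on the diagonal, with $|1-s_{n-1}/s_n|<\delta$ guaranteed by \eqref{eq:lam:cond4} and \eqref{eq:lam:cond3} if the increment is chosen $\le 1$. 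Hence all construction steps lie in $\mathscr{U}_{\bar a,\delta,\gamma}$.

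The off-diagonal entries $b,c$ never change, so the laminate measures land in $\mathscr{K}_\Lambda$ with $\alpha=c/b$, and the $\Gamma$-condition holds.

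\textbf{Step 3: computing the weights.} With additive steps $s_n=s_0+n$, the surviving weight is
\[
\gamma_n = \frac{(\text{distance to the lower endpoint})}{(\text{total length})}
\]
for each of the two splittings. The lower endpoints are $s/\Lambda$ and $s\Lambda^{-1}$, so each splitting contributes a factor
\[
\frac{s_{n-1}-s_{n-1}/\Lambda}{s_n-s_{n-1}/\Lambda}=1-\frac{\Lambda}{(\Lambda-1)s_n}+O(s_n^{-2}).
\]
Two splittings per step give
\[
\gamma_n = 1-\frac{2\Lambda}{(\Lambda-1)s_n}+O(s_n^{-2}),\qquad\text{so}\qquad \beta_n\aeq (s_0/s_n)^{p}
\]
with $p=\frac{2\Lambda}{\Lambda-1}$. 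The implied constants are uniform provided $\bar a$ is large, which \eqref{eq:lam:cond3} is meant to ensure.

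The mass deposited at step $n$ sits at matrices of norm $\aeq s_n$ (the other endpoint $\Lambda s$ is also comparable). This gives both bounds:
\[
\nu^\infty(|X|>t)\aleq \beta_{n(t)}\aleq s_0^p\, t^{-p}\aleq (1+|X_0|^p)\,t^{-p},
\qquad
\nu^\infty(|X|>t)\ageq \beta_{n(t)}\ageq \bar a^{\,p}\, t^{-p}.
\]
For small $t$ the bounds are trivial.

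\textbf{Main obstacle.} The hardest part is the precise weight computation together with the two-sided control. One must check that the product of the $\gamma_k$ decays exactly like $s_n^{-p}$ with constants independent of $X_0$. This needs the increment and $\bar a$ to be chosen so that the $O(s^{-2})$ errors are summable uniformly. At the same time one must verify that every intermediate matrix satisfies $|1-a/d|<\delta$, which is a careful but elementary bookkeeping. If the additive step proves awkward, we would instead use geometric growth $s_n=\kappa s_{n-1}$ with $\kappa$ close to $1$ and adjust the computation accordingly.
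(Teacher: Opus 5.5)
Your proposal is correct and follows essentially the paper's construction. You equalize the diagonal with one splitting, then raise the diagonal by $1$ per step via a $(1,1)$-splitting toward $\lambda=\Lambda$ and a $(2,2)$-splitting toward $\lambda=\Lambda^{-1}$, with $\alpha=c/b$ throughout, which gives $\beta_n\aeq(s_0/s_n)^p$ and both tail bounds; the paper obtains the tail bounds from this via Lemma 3.3 of \cite{KMSX24} rather than by your direct telescoping. One small slip: the surviving weight of the second splitting is $\frac{s_{n-1}-s_n/\Lambda}{s_n-s_n/\Lambda}=1-\frac{\Lambda}{(\Lambda-1)s_n}$, not the same expression as the first splitting, but this does not affect your asymptotics.
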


\begin{proof}
\textsc{Step 1. Reduction to equal diagonal entries.}
Fix $X_{0} \in \mathscr{U}_{\bar{a},\delta,\gamma}$, we write
\[
X_{0} \coloneqq \begin{pmatrix}
      x & \sigma_1\\
      -\sigma_2 & y
     \end{pmatrix} \in\Ka.
\]
Assume first \underline{$x\neq y$}

Using the definition of $\Ka$ 
and the assumptions we obtain:
\[
x,y>\bar{a}>0, \quad  \abs{1-\frac{x}{y}}<\delta<1-\frac{1}{\la}, \quad \text{ and }\abs{1-\sigma_1},\abs{1-\sigma_2}<\gamma<\frac{\la-1}{\la+1}.
\]
Our first elementary splitting is to ensure $x=y$ in all subsequent iterations.

\textsc{Case 1: }Assume firstly that $x < y$. Observe that $\frac{1}{\Lambda} y < x$ and thus $\frac{\la(y-x)}{(\la-1)y}\in(0,1)$.

Thus the following is a convex rank one decomposition
\[
\begin{split}
X_0=
\begin{pmatrix}
      x & \sigma_1\\
      -\sigma_2 & y
     \end{pmatrix} 
     &= \frac{\Lambda (y-x)}{(\Lambda -1)y} \begin{pmatrix}
      \frac{1}{\Lambda} y & \sigma_1\\
      -\sigma_2 & y
     \end{pmatrix}  + \brac{1-\frac{\Lambda (y-x)}{(\Lambda -1)y}} \begin{pmatrix}
      y & \sigma_1\\
      -\sigma_2 & y
     \end{pmatrix}\\
     &\eqqcolon \frac{\Lambda (y-x)}{(\Lambda -1)y} G_{\frac{1}{2}} + \brac{1-\frac{\Lambda (y-x)}{(\Lambda -1)y}} X_{\frac{1}{2}}.
\end{split}
     \]
We claim that $G_{\frac{1}{2}} \in \mathscr{K}_{\Lambda,\Gamma}$ and $X_{\frac{1}{2}} \in \mathscr{U}_{\bar{a},\delta,\gamma}$. The latter is obvious since $X_0 \in \mathscr{U}_{\bar{a},\delta,\gamma}$, indeed $X_{\frac{1}{2}} \in \mathscr{U}_{\bar{a},0,\gamma}$.

Secondly, take the matrix $A = \begin{pmatrix} \Lambda & 0\\ 0 & \frac{\sigma_2}{\sigma_1} \end{pmatrix}$.
We claim $\frac{1}{\la} \le \frac{\sigma_2}{\sigma_1}\le \la$. Indeed, since $X_0\in\Ka$ 
and by \eqref{eq:lam:cond2} we obtain both inequalities
\[
\frac{1}{\la} \le \frac{1-\gamma}{1+\gamma} \le \frac{\sigma_2}{\sigma_1} \le \frac{1+\gamma}{1-\gamma} \le \la.
\]
Moreover, since $|y| > 1$, this yields $G_{\frac{1}{2}}\in\mathscr{K}_{\Lambda,\Gamma}$.

\textsc{Case 2:} $x > y$. In this case we decompose differently
\[
\begin{split}
 X_0=
\begin{pmatrix}
      x & \sigma_1\\
      -\sigma_2 & y
     \end{pmatrix} 
     &= \frac{\Lambda (x-y)}{(\Lambda -1)x} \begin{pmatrix}
      x & \sigma_1\\
      -\sigma_2 & \frac{1}{\Lambda} x
     \end{pmatrix}  + \brac{1-\frac{\Lambda (x-y)}{(\Lambda -1)x}} \begin{pmatrix}
      x & \sigma_1\\
      -\sigma_2 & x
     \end{pmatrix}\\
     &\eqqcolon \frac{\Lambda (x-y)}{(\Lambda -1)x} G_{\frac{1}{2}} + \brac{1-\frac{\Lambda (x-y)}{(\Lambda -1)x}} X_{\frac{1}{2}}.
\end{split}
     \]
As previously we want to ensure that $\frac{\la(x-y)}{(\la-1)x}\in(0,1)$, $X_{\frac{1}{2}}\in\Ka$ 
and $G_{\frac{1}{2}}\in\mathscr{K}_{\Lambda,\Gamma}$.

Expression $\frac{\la(x-y)}{(\la-1)x}$ is clearly nonnegative as $x>y$. Observe that the required inequality $\frac{\la(x-y)}{(\la-1)x}<1$ is equivalent to $\la y>x$. We know that $1-\frac{x}{y}>-\delta$ and by \eqref{eq:lam:cond1} $-\delta>\frac{1}{\la}-1$. This gives $x<2y-\frac{y}{\la}$ (remembering $y>\bar{a}>0$). Thusly, it suffices to show that $2y-\frac{y}{\la}\leq\la y$. However, this inequality is certainly true as it reduces to $(\la-1)^2>0$, valid for $\la\neq 1$.
By taking a matrix $A = \begin{pmatrix} \frac{1}{\Lambda} & 0\\ 0 & \frac{\sigma_2}{\sigma_1} \end{pmatrix}$ and proceeding analogously as before we get $X_{\frac{1}{2}} \in \mathscr{U}_{\bar{a},0,\gamma} \subset \mathscr{U}_{\bar{a},\delta,\gamma}$ and $G_{\frac{1}{2}}\in\mathscr{K}_{\Lambda,\Gamma}$.

\textsc{Case 3} If $x=y$ we simply skip this step and assume $\delta = 0$.

\textsc{Step 2. The basic iterative scheme.}

From now on we assume $x=y$. The following is a rank-1 decomposition
\begin{equation}\label{eq:Xdecomposition}
\begin{split}
X_0 \coloneqq \begin{pmatrix}
      x & \sigma_1\\
      -\sigma_2 & x
     \end{pmatrix} 
     &= \frac{\Lambda}{\Lambda (x + 1)-x} \begin{pmatrix}
      \frac{1}{\Lambda} x & \sigma_1\\
      -\sigma_2 & x
     \end{pmatrix}  + \brac{1-\frac{\Lambda}{\Lambda (x + 1)-x}} \begin{pmatrix}
      x+1 & \sigma_1\\
      -\sigma_2 & x
     \end{pmatrix}\\
     &=\lambda_{1,1}G_{1,1} + (1-\lambda_{1,1})\begin{pmatrix}
      x+1 & \sigma_1\\
      -\sigma_2 & x
     \end{pmatrix}.
     \end{split}
\end{equation}
Clearly $\lambda_{1,1}=\frac{\Lambda}{\Lambda (x + 1)-x} \in (0,1)$ for any $x>0$ and $\la>1$.

Furthermore, choosing in \eqref{def:KLambda}
\[
 A \coloneqq\begin{pmatrix}
       \Lambda & 0\\
       0 & \frac{\sigma_2}{\sigma_1}
      \end{pmatrix}
\]
we see that, proceeding as in \textsc{Step 1},
\begin{equation}\label{eq:G11}
 G_{1,1} = \begin{pmatrix}
      \frac{1}{\Lambda} x & \sigma_1\\
      -\sigma_2 & x
     \end{pmatrix} \in \mathscr{K}_{\Lambda,\Gamma}.
\end{equation}

%\iarmin{important for construction}
Moreover, we have
\[
\begin{pmatrix}
      x+1 & \sigma_1\\
      -\sigma_2 & x
     \end{pmatrix} \in \mathscr{U}_{\bar{a},\delta,\gamma}.
\]
Indeed, clearly $x+1>x > \bar{a}$, and $\sigma_1,\sigma_2$ did not change. Using \eqref{eq:lam:cond4} we get 
\[
\abs{ 1-\frac{x+1}{x}} = \frac{1}{x} < \frac{1}{\bar{a}} < \delta.
\]

Next, we continue the decomposition of the second matrix of \eqref{eq:Xdecomposition}
\begin{equation}\label{eq:Xseconddecomposition}
\begin{split}
&\begin{pmatrix}
      x+1 & \sigma_1\\
      -\sigma_2 & x
     \end{pmatrix}\\
     &=  \frac{\Lambda}{\Lambda(x+1)-(x+1)} \begin{pmatrix}
      x+1 & \sigma_1\\
      -\sigma_2 & \frac{1}{\Lambda} (x+1)
     \end{pmatrix}  + \brac{1- \frac{\Lambda}{\Lambda(x+1)-(x+1)}} \begin{pmatrix}
      x+1 & \sigma_1\\
      -\sigma_2 & x+1
     \end{pmatrix}\\
     &\eqqcolon  \frac{\Lambda}{\Lambda(x+1)-(x+1)} G_{1,2} + \brac{1- \frac{\Lambda}{\Lambda(x+1)-(x+1)}} X_1.
\end{split}
     \end{equation}

Here we observe that $\frac{\la}{\Lambda(x+1)-(x+1)}< 1$: by \eqref{eq:lam:cond3} we have $x>\bar{a}>\frac{1}{\la-1}$. Multiplying firstly by $\la-1$ and then adding $\la-1$ to both sides we obtain 
\[                                                                                                                                                                                                                                                                
\la<\la(x+1)-(x+1).  
\]
Again taking in \eqref{def:KLambda} $A = \begin{pmatrix}
       \frac{1}{\Lambda} & 0\\
       0 & \frac{\sigma_2}{\sigma_1}
      \end{pmatrix}$ we can ensure that the first matrix  on the right-hand side of \eqref{eq:Xseconddecomposition} belongs to $\mathscr{K}_{\Lambda,\Gamma}$, i.e., 
\[
G_{1,2} = \begin{pmatrix}
      x+1 & \sigma_1\\
      -\sigma_2 & \frac{1}{\Lambda} (x+1)
     \end{pmatrix} \in \mathscr{K}_{\Lambda,\Gamma}.\]
As for the second matrix on the right-hand side of \eqref{eq:Xseconddecomposition} we have
\[
X_1 =  \begin{pmatrix}
      x+1 & \sigma_1\\
      -\sigma_2 & x+1
     \end{pmatrix} \in \mathscr{U}_{\bar{a},0,\gamma} \subset \mathscr{U}_{\bar{a},\delta,\gamma}.
\]
Indeed, it follows since $x+1>x>\bar{a}$ and $\sigma_1$ and $\sigma_2$ did not change.

Combining \eqref{eq:Xdecomposition} with \eqref{eq:Xseconddecomposition} we obtain
\begin{equation}\label{eq:inductionbegining}
 X_0 = \lambda_{1,1} G_{1,1} + \lambda_{1,2}G_{1,2} + \gamma_1 X_1,
\end{equation}
where $\lambda_{1,2}=\brac{1-\frac{\Lambda}{\Lambda (x + 1)-x}}\frac{\Lambda}{\Lambda(x+1)-(x+1)}$, $\gamma_1=\brac{1-\frac{\Lambda}{\Lambda (x + 1)-x}}\brac{1- \frac{\Lambda}{\Lambda(x+1)-(x+1)}}$.

\textsc{Step 3. Iterative scheme.}

We can iterate the construction from Step 2 by setting
\[
 X_{n-1} = \lambda_{n,1}G_{n,1}+\lambda_{n,2}G_{n,2}+\gamma_{n}X_n
\]
for
\begin{equation}\label{eq:Gndefs}
 G_{n,1} \coloneqq \begin{pmatrix}
      \frac{1}{\Lambda} (x+n-1) & \sigma_1\\
      -\sigma_2 & (x+n-1)
     \end{pmatrix} \in \mathscr{K}_{\Lambda,\Gamma}, \quad \lambda_{n,1} \coloneqq \frac{\Lambda}{\Lambda (x + n)-(x+n-1)},
\end{equation}
\[
 G_{n,2} \coloneqq\begin{pmatrix}
      x+n & \sigma_1\\
      -\sigma_2 & \frac{1}{\Lambda} (x+n)
     \end{pmatrix} \in \mathscr{K}_{\Lambda,\Gamma}, \quad \lambda_{n,2} \coloneqq \brac{1-\frac{\Lambda}{\Lambda (x + n)-(x+n-1)}}\frac{\Lambda}{\Lambda(x+n)-(x+n)},
\]
and for the soaring sequence
\begin{equation}\label{eq:Xn}
\begin{split}
 X_n &\coloneqq \begin{pmatrix}
      x+n & \sigma_1\\
      -\sigma_2 & x+n
     \end{pmatrix} \in \mathscr{U}_{\bar{a},0,\gamma} \subset \mathscr{U}_{\bar{a},\delta,\gamma}
\end{split}
     \end{equation}
     with
     \[
      \gamma_n \coloneqq \brac{1-\frac{\Lambda}{\Lambda (x + n)-(x+n-1)}}\brac{1-\frac{\Lambda}{\Lambda(x+n)-(x+n)}}.
     \]
Then $|X_n|$ is monotonically increasing with $\lim_{n \to \infty} |X_n| = \infty$.

Moreover, for later use, it is worth pointing out, since $x > \bar{a} \geq 4$ and $\abs{\sigma_1},\abs{\sigma_2} \leq 2$ 
\begin{equation}\label{eq:Xnest}
|X_n| \aeq |x+n| \quad \forall n =0,1,\ldots
\end{equation}
with constants independent of $x$ and $n$.

We also have that all elementary splittings above originate from a matrix in $\mathscr{U}_{\bar{a},\delta,\gamma}$.

Moreover, using the notation of \cite[Proposition 3.1]{KMSX24} we set
\begin{equation}\label{eq:AFSmundef}
\begin{split}
 \mu_n &\coloneqq \frac{\lambda_{n,1}}{\lambda_{n,1}+\lambda_{n,2}} \delta_{G_{n,1}} + \frac{\lambda_{n,2}}{\lambda_{n,1}+\lambda_{n,2}} \delta_{G_{n,2}}\\
 &= \frac{\lambda_{n,1}}{1-\gamma_n} \delta_{G_{n,1}} + \frac{\lambda_{n,2}}{1-\gamma_n} \delta_{G_{n,2}}
\end{split}
 \end{equation}
which is supported in $\mathscr{K}_{\Lambda,\Gamma}$ and
\[
 \omega_n = (1-\gamma_n) \mu_n + \gamma_n \delta_{X_n} = \lambda_{n,1} \delta_{G_{n,1}}  + \lambda_{n,2} \delta_{G_{n,2}}  + \gamma_{n} \delta_{X_n}
\]
is by our construction a laminate of finite order with barycenter $X_{n-1}$.
%Before we proceed further it is useful to note that 
%\[\frac{\lambda_{k,1}}{1-\gamma_k}=\frac{x+k}{2(x+k)-1},\quad \frac{\lambda_{k,2}}{1-\gamma_k}=\frac{x+k-1}{2(x+k)-1}.\]
Set
\[
 \beta_n \coloneqq \prod_{k=1}^n \gamma_k, \quad \beta_0 \coloneqq 1
\]
and define
\[
\nu^N \coloneqq \sum_{n=1}^N \beta_{n-1} (1-\gamma_n) \mu_n + \beta_N \delta_{X_N}
\]
which is a laminate of finite order with construction steps in $\mathscr{U}_{\bar{a},\delta,\gamma}$, with $\supp \nu^N \subset \mathscr{K}_{\Lambda,\Gamma} \cup \{X_N\}$, and its limit (in the sense of %\cite[Proposition 3.1]{KMSX24}
\Cref{defstair}) is a staircase laminate with construction steps in $\mathscr{U}_{\bar{a},\delta,\gamma}$ called $\nu^\infty$ with $\supp \nu^\infty \subset \mathscr{K}_{\Lambda,\Gamma}$ and barycenter $X_0$. This shows that $\nu^\infty\in SL(\mathscr{K}_{\Lambda,\Gamma},\mathscr{U}_{\bar{a},\delta,\gamma})$.
% since in \Cref{le:KUdisjoint} we have already established the condition $\Ka\cap\K=\emptyset$.

\textsc{Step 4. Further properties of the staircase laminate.}

Now we focus on proving \eqref{lam:upper} and \eqref{lam:lower}.

We want to apply \cite[Lemma 3.3]{KMSX24}. To do this, we first observe that
\begin{equation}\label{3.4}
|X_n|\leq |X_{n+1}|\leq c|X_n|\quad \text{for all } n =1,2,\ldots
\end{equation}
for some constant $c>1$ independent of $X_0$. Indeed, this is clear by construction, \eqref{eq:Xn}, observing that
\[
1 \leq \abs{\frac{x+(n+1)}{x+n}} \leq 2 \quad \forall x \geq 0,\ n =1,2,\ldots
\]

% \aksharav{fixing constants here} \akshara{($c$ could even depend on $\la$ %and on the norm $|X_0|$
% ). Clearly $|X_n|\leq |X_{n+1}|$ and by taking %$c\coloneqq|1+1/x|$ 
% $c=2$
% \[
% \frac{(x+n+1)^2}{(x+n)^2}=\bigg(1+\frac{1}{x+n}\bigg)^2\leq \bigg(1+\frac{1}{n}\bigg)^2 \leq 2^2.
% \]
% The latter implies $|X_{n+1}|<c|X_n|$, which gives \eqref{3.4}.}

Let us begin with the proof of \eqref{lam:prop4}, \eqref{lam:upper} --- the upper bound.

We need to find constants %$c_0(|X_0|,\la)=c_0\geq 1$ and $M_0(|X_0|,\la)=M_0\geq 1$ 
$c_0(\la)=c_0\geq 1$ and $M_0(\la)=M_0\geq 1$
such that for each $n$ we have  
\begin{equation}\label{supp:cond}
\supp\mu_n\subseteq\{X\colon |X|\leq c_0|X_n|\}
\end{equation}
and 
\begin{equation}\label{beta:cond}
\beta_n|X_n|^p\leq M_0 (1+|X_0|^p),
\end{equation}
for $p\coloneqq \frac{2\la}{\la-1}$.

The former, \eqref{supp:cond}, is obvious: $\supp \mu_n \subset \bigcup_{j=1}^n \bigcup_{i=1}^2 \{G_{j,i}\}$, we have a formula for the $G$, \eqref{eq:Gndefs}, and an estimate for $X_n$ \eqref{eq:Xnest}, and $\abs{\sigma_1},\abs{\sigma_2} \leq 2$.

We now discuss \eqref{beta:cond}
%We will show that for large $n$ we have $\beta_n\approx n^{-p}$, which ensures \eqref{beta:cond}, since $|X_n|^p\approx n^p$ (for small $n$ the existence of a constant $M_0$ is obvious, because there are only finitely many inequalities to satisfy).
%We have
%\[\beta_n=\prod_{j=1}^n \gamma_j=\prod_{j=1}^n\frac{x+j-1}{x+j}\cdot\frac{x+j-\frac{\la}{\la-1}}{x+j+\frac{1}{\la-1}}.\]
% This formula follows from the definition of $\gamma_j$ and a careful multiplication, i.e.,
% \begin{equation*}
% \begin{split}
% \gamma_j=&\brac{1-\frac{\Lambda}{\Lambda (x+j){-}(x+j-1)}}\brac{1-\frac{\Lambda}{\Lambda(x+j)-(x+j)}}\\
% =&\frac{\la(x+j)-(x+j-1)-\la}{\Lambda (x+j){-}(x+j-1)}\cdot\frac{\la(x+j)-(x+j)-\la}{\Lambda(x+j)-(x+j)}\\
% =&\frac{(x+j-1)(\la-1)}{(x+j)(\la-1)+1}\cdot\frac{(x+j)(\la-1)-\la}{(x+j)(\la-1)}=\frac{x+j-1}{x+j}\cdot\frac{x+j-\frac{\la}{\la-1}}{x+j+\frac{1}{\la-1}}.
% \end{split}
% \end{equation*}
%Now using $\prod_{j=1}^n(j+\theta)=\frac{\Gamma(n+1+\theta)}{\Gamma(1+\theta)}$ for any $\theta\in\R$ we can rewrite expression above as 
%\[\beta_n=\frac{x}{x+n}\frac{\Gamma(n+x-\frac{1}{\la-1})}{\Gamma(x-\frac{1}{\la-1})}\frac{\Gamma(x+\frac{\la}{\la-1})}{\Gamma(n+x+\frac{\la}{\la-1})}.\]
%By virtue of the asymptotics $\frac{\Gamma(n+\alpha)}{\Gamma(n+\beta)}\approx n^{\alpha-\beta}$ for large $n$ we obtain for large $n$ 
%\[\beta_n\approx n^{-1+x-\frac{1}{\la-1}-x-\frac{\la}{\la-1}}=n^{-\frac{2\la}{\la-1}}.\]
%This yields \eqref{beta:cond} for some constant $M_0$ dependent on $|X_0|$ (precisely only on $x$) and on $\la$.

We have
\[\beta_n=\prod_{j=1}^n \gamma_j=\prod_{j=1}^n\frac{x+j-1}{x+j}\cdot\frac{x+j-\frac{\la}{\la-1}}{x+j+\frac{1}{\la-1}}.\]

Recall that $x > \bar{a} > \frac{1}{\Lambda-1}+10$, so we have 
\[
\frac{x+j-1}{x+j}, \frac{x+j-\frac{\Lambda}{\Lambda-1}}{x+j}  \succsim_{\bar{a}} 1 \quad %\forall j\in\N, x>\bar{a} 
\text{for any } j \in \N \text{ and any } x > \bar{a}.
\]
Let $$a\coloneqq \frac{\Lambda}{\Lambda-1}, \;\;\;b\coloneqq  \frac{1}{\Lambda-1}.$$
Then we have for $n\geq 1$
\[\begin{split}
    \ln \prod_{j=1}^n \gamma_j&= \sum_{j=1}^n \left( \ln \frac{x+j-1}{x+j}+\ln \frac{x+j-a}{x+j+b} \right)\\
    &= \sum_{j=1}^n \left( \ln \left( 1- \frac{1}{x+j}\right)+ \ln \left(1- \frac{a+b}{x+j+b}\right) \right)\\
    &\leq \sum_{j=1}^n \left( -\frac{1}{x+j}-\frac{a+b}{x+j+b} \right)\\
    &\leq -(1+a+b)\sum_{j=1}^n \frac{1}{x+j+b}.
\end{split}\]
Note that 
\[
\begin{split}
    \sum_{j=1}^n \frac{1}{x+j+b}&\geq \sum_{j=1}^n \int_{x+j+b}^{x+j+b+1}\frac{1}{t}dt\\
    & \geq  \int_{x+b+1}^{x+b+n+1}\frac{1}{t}dt= \ln \frac{x+b+n+1}{x+b+1}.
\end{split}
\]
Therefore, we get
$$\ln \prod_{j=1}^n \gamma_j\leq -(1+a+b) \ln \frac{x+b+n+1}{x+b+1}.$$
Since $1+a+b=2\Lambda/(\Lambda-1)$, and using \eqref{eq:Xnest} we have 
%$$\beta_n\leq \prod_{j=1}^n \gamma_j\leq \left(\frac{x+b+1}{x+b+n+1}\right)^{\frac{2\Lambda}{\Lambda-1}} \asymp_{\Lambda} \brac{\frac{|X_0|}{|X_n|}}^p.$$
$$\beta_n\leq \prod_{j=1}^n \gamma_j\leq \left(\frac{x+b+1}{x+b+n+1}\right)^{\frac{2\Lambda}{\Lambda-1}} \asymp_{\Lambda,\bar{a}} \brac{\frac{|X_0|}{|X_n|}}^p.$$
This readily implies \eqref{beta:cond} -- and by \cite[Lemma 3.3]{KMSX24} we conclude \eqref{lam:upper}.

It remains to establish property 
%\eqref{lam:prop5}, 
\eqref{lam:lower}, again using \cite[Lemma 3.3]{KMSX24}. To do this, we must find constants $c_2=c_2(\la,\bar{a})>0$ and $M_1=M_1(\la,\bar{a})>0$ such that 
\begin{equation}\label{mun}
\mu_n(\{X\colon |X|\geq c_2|X_n|\})\geq c_2 \quad \forall n \in \N
\end{equation}
and 
\begin{equation}\label{beta:again}
\beta_n|X_n|^p\geq M_1 \quad \forall n \in \N.
\end{equation}

We first focus on the \eqref{mun}. Recalling the formula for $G_{n,i}$, \eqref{eq:Gndefs}, as for \eqref{eq:Xnest} we observe  that 
\[
|G_{n,1}| \aeq_{\Lambda,\bar{a}} |X_{n}| \quad \forall n \in \N, x > \bar{a}.
\]
Thus, recalling the formula for $\mu_n$, \eqref{eq:AFSmundef}, to prove \eqref{mun} it suffices to show 
\[
\frac{\lambda_{n,1}}{1-\gamma_n} \ageq_{\Lambda,\bar{a}} 1 \quad \forall n \in \N.
\]
But
\[\frac{\lambda_{n,1}}{1-\gamma_n}=\frac{x+n}{2(x+n)-1} \ageq_{\Lambda,\bar{a}} 1 \quad \forall x >\bar{a}, n \in \N.\]
This implies \eqref{mun}.

We now discuss \eqref{beta:again}. Recall
    \[
      \beta_n= \prod_{j=1}^n\gamma_j = \prod_{j=1}^n\brac{1-\frac{\Lambda}{\Lambda (x + j)-(x+j-1)}}\brac{1-\frac{\Lambda}{\Lambda(x+j)-(x+j)}}.
     \]
    Observe that for any $x > \bar{a}$ and any $j \in \N$ 
    \[
\frac{\Lambda}{\Lambda (x + j)-(x+j-1)} = \frac{1}{x+j} \frac{\Lambda}{\Lambda - \frac{x+j-1}{x+j}}\leq \frac{\Lambda}{\Lambda-1} \frac{1}{x+j},
    \]
    and 
\[
\frac{\Lambda}{\Lambda(x+j)-(x+j)} = \frac{\Lambda}{\Lambda-1} \frac{1}{x+j}.
\]
Thus,
\[
\beta_n \geq \prod_{j=1}^n \brac{1-\frac{\Lambda}{\Lambda-1} \frac{1}{x+j}}^2 \aeq \prod_{j=\lfloor x\rfloor }^n \brac{1-\frac{\Lambda}{\Lambda-1} \frac{1}{j}}^2 \aeq \brac{\frac{\lfloor x\rfloor}{n} }^{\frac{2\Lambda}{\Lambda-1}} 
\]
which implies 
\[
|X_n|^p \beta_n \ageq \brac{|x+n| \frac{\lfloor x\rfloor}{n} }^{p}  \ageq_{\bar{a},\Lambda} |x|^p \aeq |X_0|^p 
\]
for all $x > \bar{a}$, $n \in \N$. This is \eqref{beta:again}.

From \cite[Lemma 3.3]{KMSX24} we thus conclude 
\[
\nu^\infty (\{ X: |X| >t\}) \ageq |X_0|^p t^{-p} \quad \forall t \geq c_1 |X_0|
\]
and clearly for $t \in [1,c_1 |X_0|]$ we have
\[
\nu^\infty (\{ X: |X| >t\}) \geq \nu^\infty (\{ X: |X| >c_1 |X_0|\}) \ageq |X_0|^{-p} |X_0|^p =1.
\]
Together (since $|X_0| \ageq 1$) we have established \eqref{lam:lower}. We can conclude.
\qedhere

\end{proof}

    \subsection{Proof of Theorem~\ref{th:AFS}}
    We prove below a slightly more general statement from which
    Theorem~\ref{th:AFS} (more general in the sense of the boundary data) follows.
    ﻿
    \begin{theorem}
    Fix $\Lambda > 1$ and choose $\bar{a}, \delta,\gamma,\Gamma>0$ so that the assumptions \eqref{eq:lam:cond1}, \eqref{eq:lam:cond2}, \eqref{eq:lam:cond3}, \eqref{eq:lam:cond4} of \Cref{la:laminate} are all satisfied.

    Take any $b \in \R$ and any $\bar{X} \coloneqq  \begin{pmatrix}
    \bar{x}_0 & \bar{x}_1\\
    \bar{y}_0 & \bar{y}_1
    \end{pmatrix} \in \mathscr{U}_{\bar{a},\delta,\gamma}$, cf. \eqref{def:reasonableset}, and set $l(x)\coloneqq  \langle \begin{pmatrix} \bar{x}_0\\\bar{x}_1 \end{pmatrix}, x\rangle_{\R^2} + b$ for some $b \in \R$.
    ﻿
 Fix any $\alpha \in (0,1)$ and $\Omega\subset \R^2$ a regular domain. There exists $u \in W^{1,2}(\Omega, \R) \cap C^\alpha(\bar{\Omega}, \R)$, and a bounded, measurable diagonal matrix valued function $A(x) = \begin{pmatrix} \lambda_1(x) & 0\\
                                                                                                        0 & \lambda_2(x)
                                                                                                       \end{pmatrix}$ with $\frac{1}{\Lambda} \leq \lambda_1(x),\lambda_2(x) \leq \Lambda$ for a.e. $x\in\Omega$.
    with the following properties
    \begin{equation}\label{boundaryl}
    u(x) = l(x) \text{ for }x\in\partial \Omega;
    \end{equation}
    \begin{equation}\label{div}
    \div(A \nabla u) = 0 \text{ in a distributional sense in }\Omega;
    \end{equation}
    \begin{equation}\label{Lipsch}
    \nabla u \in L^{\frac{2\Lambda}{\Lambda-1},\infty}(\Omega);
    \end{equation}
    and in the sense of Lorentz spaces $L^{\frac{2\Lambda}{\Lambda-r},r}$ we have
    \begin{equation}\label{lorentz}
    \nabla u \not \in L^{\frac{2\Lambda}{\Lambda-1},r}(\Omega) \text{ for any } 0 < r < \infty,\text{ in particular } \nabla u \not \in L^{\frac{2\Lambda}{\Lambda-1}}(\Omega).
    \end{equation}
    \end{theorem}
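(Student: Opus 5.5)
The plan is to combine \Cref{th:UbardeltagammaReducible} with \Cref{th:thm4.1} and then read off the PDE from the inclusion $\nabla w\in\mathscr{K}_{\Lambda,\Gamma}$.

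First, by \Cref{la:laminate} and \Cref{th:thm4.3}, the open set $\mathscr{U}_{\bar{a},\delta,\gamma}$ can be $\mathscr{U}_{\bar{a},\delta,\gamma}$-reduced to $\mathscr{K}_{\Lambda,\Gamma}$ exactly in weak $L^p$ with $p=\frac{2\Lambda}{\Lambda-1}$. This is \Cref{th:UbardeltagammaReducible} for the fixed admissible parameters. We then apply \Cref{th:thm4.1} with $K=\mathscr{K}_{\Lambda,\Gamma}$, $U=\mathscr{U}_{\bar{a},\delta,\gamma}$, $X_0=\bar X$, the vector $(b,0)$, some $\delta>0$ and the given $\alpha$. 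This produces a piecewise affine $w=(u,v)\in W^{1,1}\cap C^\alpha(\bar\Omega,\R^2)$ with $w=\bar Xx+(b,0)$ on $\partial\Omega$, so that $u=l$ on $\partial\Omega$, which is \eqref{boundaryl}. Moreover $\nabla w\in\mathscr{K}_{\Lambda,\Gamma}$ a.e., and
\[
c\,t^{-p}\le |\{|\nabla w|>t\}|\le C t^{-p}\quad (t>1).
\]

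Next we transfer these bounds to $\nabla u$. The defining cone condition of $\mathscr{K}_{\Lambda,\Gamma}$ gives $\Gamma^{-1}\le \frac{|a|+|b|}{|c|+|d|}\le\Gamma$, so $|\nabla u|\aeq|\nabla w|$ pointwise a.e., with constants depending on $\Gamma$. Hence $|\{|\nabla u|>t\}|\aeq t^{-p}$ for large $t$.
\begin{itemize}
\item The upper bound gives \eqref{Lipsch}. Since $p>2$, it also gives $\nabla u\in L^2$ on the bounded set $\Omega$, and $u\in C^\alpha$, so $u\in W^{1,2}$.
\item The lower bound gives $t^{p}|\{|\nabla u|>t\}|\ge c'>0$ for all large $t$. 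Then the Lorentz quasinorm $\int_0^\infty (t\,|\{|\nabla u|>t\}|^{1/p})^r\frac{dt}{t}$ diverges for every finite $r$, because the integrand is bounded below by a constant times $t^{-1}$ near infinity. This is \eqref{lorentz}; the case $r=p$ is $\nabla u\notin L^p$.
\end{itemize}

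For the coefficient field, at a.e. $x$ the definition \eqref{def:KLambda} supplies a diagonal $A(x)=\mathrm{diag}(\lambda_1,\lambda_2)$ with entries in $[\Lambda^{-1},\Lambda]$ and $A(x)\nabla u(x)+\nabla^\perp v(x)=0$, as in \eqref{kappaimpliespde}. Measurability of $x\mapsto A(x)$ comes from the piecewise affine structure: $\nabla w$ is constant on each of the countably many regular domains $\Omega_i$, so we may choose $A$ constant on each $\Omega_i$. On the null complement we set $A=I$. The first entry is determined by the first row of the relation whenever $\partial_1 u\ne0$. The second entry is determined whenever $\partial_2 u\neq 0$; otherwise any admissible value works.

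Finally, $A\nabla u=-\nabla^\perp v$ a.e. We have $v\in W^{1,1}$, and in fact $\nabla v\in L^2$ by the same weak-$L^p$ bound. Therefore, for $\varphi\in C_c^\infty(\Omega)$,
\[
\int A\nabla u\cdot\nabla\varphi=-\int\nabla^\perp v\cdot\nabla\varphi=0,
\]
because $\operatorname{div}\nabla^\perp v=0$ distributionally. This is \eqref{div}.

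The only delicate points are the two equivalences $|\nabla u|\aeq|\nabla w|$ via $\Gamma$ and the measurable selection of $A$. Both are essentially bookkeeping. The main conceptual obstacle is already settled by \Cref{th:UbardeltagammaReducible}: because the dust stays in $\mathscr{U}_{\bar{a},\delta,\gamma}$, the scheme can restart with the sharp exponent.
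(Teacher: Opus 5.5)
Your proposal is correct and follows the paper's proof. Both obtain exact weak-$L^p$ reducibility of $\mathscr{U}_{\bar a,\delta,\gamma}$ to $\mathscr{K}_{\Lambda,\Gamma}$, apply \Cref{th:thm4.1} with boundary datum $\bar X x+(b,0)$, read off $\div(A\nabla u)=0$ from \eqref{kappaimpliespde}, and transfer the two-sided weak-$L^p$ bounds from $\nabla w$ to $\nabla u$ via $|\nabla u|\aeq_\Gamma|\nabla w|$. Your extra details fill in points the paper passes over: $p>2$ together with the weak-$L^p$ bound gives $u\in W^{1,2}$ (the paper loosely writes $\nabla w\in L^{p}$, which would contradict \eqref{lorentz}), and $A$ can be chosen piecewise constant, hence measurable.
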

    \begin{proof}

By \Cref{th:UbardeltagammaReducible}, $\mathscr{U}_{\bar{a},\delta,\gamma}$ can be $\mathscr{U}_{\bar{a},\delta,\gamma}$-reduced to $\mathscr{K}_{\Lambda,\Gamma}$ exactly in weak $L^p$, cf. \eqref{def:KLambda}.

    By \Cref{th:thm4.1}, this implies that for any $X_0 \coloneqq  \bar{X}\in \mathscr{U}_{\bar{a},\delta,\gamma}$ there exists $w = (u,v) \in W^{1,1}(\Omega,\R^2) \cap C^\alpha$, $\nabla w \in L^{\frac{2\Lambda}{\Lambda-1}}(\Omega)$ such that 
    \[
\nabla w \in \mathscr{K}_{\Lambda,\Gamma} \subset \mathscr{K}_{\Lambda} \quad \text{a.e. in $\Omega$}
    \]
    and 
    \[
w = X_0x + \begin{pmatrix} b\\0\end{pmatrix} \quad \text{on $\partial \Omega$}.
    \]
Clearly $u$ satisfies \eqref{boundaryl}, also \eqref{Lipsch} is an immediate consequence since $|\nabla u| \leq |\nabla w|$. 
    
Moreover, by construction of $\mathscr{K}_{\Lambda}$, \eqref{kappaimpliespde}, we find $A(x)$ with the desired properties and \eqref{div} is satisfied.   

The only remaining property to establish is \eqref{lorentz}. Observe that since $\nabla w \in \mathscr{K}_{\Lambda,\Gamma}$ we have 
\[
|\nabla w| \aeq_{\Gamma} |\nabla u| \aeq_{\Gamma} |\nabla v| \quad \text{a.e. in $\Omega$}.
\]
Thus, from \eqref{eq:wewant4.2c2} we find that for some constant $c_1,c_2 > 0$,
\[
\bigl|\{x\in\Omega\colon \ |\nabla u(x)|>t\}\bigr|
\geq \bigl|\{x\in\Omega\colon \ |\nabla w(x)|> c_1t\}\bigr|
\geq c_2 t^{-p} \quad \forall t \in (1,\infty).
\]

Thus for any $r < \infty$, $p\coloneqq \frac{2\Lambda}{\Lambda-1}$, 
\[
\begin{split}
\|\nabla u\|_{L^{p,r}(\Omega)}^r =& 
\int_0^\infty  \abs{\{x\in \Omega: |\nabla u(x)|>t\}}^{\frac{r}{p}} t^r \frac{dt}{t}\\
\ageq&\int_1^\infty  \abs{t^{-p}}^{\frac{r}{p}} t^r \frac{dt}{t}\\
=&+\infty
\end{split}
\]
and thus $\nabla u \not \in L^{p,r}(\Omega)$. In particular, $\nabla u \not \in L^{p}(\Omega) = L^{p,p}(\Omega)$. This finishes the proof.

    \end{proof}

    \bibliographystyle{abbrv}
    \bibliography{bib}
    ﻿
    ﻿
    \end{document}